\theoremstyle{plain}
\def\del  {\partial}
\def\eps{\varepsilon}
\def\R{\mathbb{R}}
\def\N{\mathbb{N}}
\def\C{\mathbb{C}}
 \def\dx{{\rm d}x}
 \def\dy{{\rm d}y}
\newtheorem{proposition}{\textbf{Proposition}}
\newtheorem{corollary}{\textbf{Corollary}}
\newtheorem{remark}{\textbf{Remark}}
\newtheorem{theorem}{\textbf{Theorem}}
\newtheorem{definition}{\textbf{Definition}}
\author{
  %{\normalsize Fr\'ed\'eric Magoul\`es}\thanks{CentraleSup\'elec, Universit\'e Paris-Saclay, France.}
  %\and
  %{\normalsize Thi Phuong Kieu Nguyen}\thanks{CentraleSup\'elec, Universit\'e Paris-Saclay, France.}
	%\and
	%{\normalsize Pascal Omnes}\thanks{Commissariat \`a l'\'Energie Atomique et aux \'Energies Saclay, France.}
	%\and
  {\normalsize Anna Rozanova-Pierrat}\thanks{CentraleSup\'elec, Universit\'e Paris-Saclay, France
    (correspondence, anna.rozanova-pierrat@centralesupelec.fr).}
		}
\title{Generalization of Rellich-Kondrachov theorem and trace compacteness in the framework of irregular and fractal boundaries}
\date{}
\begin{document}
\maketitle
\thispagestyle{fancy}

\begin{abstract}
\noindent We present a survey of recent results of the functional analysis allowing to solve PDEs in a large class  of domains with irregular boundaries. We extend the previously introduced concept of admissible domains with a $d$-set boundary on the domains with the boundaries on which the measure is not necessarily Ahlfors regular $d$-measure. This gives a generalization of Rellich-Kondrachov theorem and  the compactness of the trace operator, allowing to obtain,
as for a regular classical case
the unicity/existence of weak solutions of Poisson boundary valued problem with the Robin boundary condition and to obtain the usual properties of the associated spectral problem. 
%We also present a survey of known works about the regularity of the found weak solutions of $-\Delta$ %and of its spectral problem
%with Dirichlet and Robin boundary conditions on bounded domains of different types. % in the aim to find the most possible irregular case.

 %We 
% 
% the generalization of Rellich-Kondrachov theorem and with the compacteness of the trace operator in the framework of  
% 
% which allows to   introduce a class of .
%  In this class of domains with a $d$-set boundary thanks the known Rellich-Kondrachov theorem  and prove the compactness of the trace operator viewed as an operator from the Sobolev space $H^1(\Omega)$ to $L^2(\partial \Omega)$. These results allow to treat the PDEs on domains with $d$-set boundaries in the same way as for a regular classical case for the unicity/existence of weak solutions and for the properties of the spectral problems.  
\end{abstract}

\begin{keywords}
fractal boundaries, compact operators, $d$-set, trace and extension operators, Rellich-Kondrachov theorem.
\end{keywords}

\section{Introduction}
From the theory of the partial differential equations it is known that the irregularity  of the boundary of the considered domain can be a serious obstacle even for the proof of the existence of a weak solution. In this paper we are interesting in the question which is the worst boundary or a class of boundaries for which we still have the weak well posedness of the elliptic problems. % and also in the question about the regularity of these weak solutions.
For fixing a typical example we chose to work with the Poisson equation with the homogeneous Robin boundary condition
\begin{equation}\label{LaplaceeqRobin1}
\left\lbrace
\begin{array}{l}
-\Delta u=f \hbox{ in }\Omega,\\
\frac{\partial u}{\partial\nu}+\alpha u=0 \hbox{ with }\alpha>0 \hbox{ on }\partial\Omega.
\end{array}
\right.
\end{equation}
Thus the general approach is to start to find the weak formulation of this  problem.
Hence, it is important to be able integrate by parts and to work with the trace operator on $\del \Omega$.
For at least Lipschitz $\del \Omega$ it is  classical and well-known (for sufficiently smooth boundary see Raviart-Thomas~\cite{RAVIART-1983}, for the Lipschitz case see Marschall~\cite{MARSCHALL-1987} and~\cite{GRISVARD-1974, NECAS-1967}).
 
 %For the integration by parts, i
 If $\del \Omega$ is Lipschitz, then the normal unit vector $\nu$ to the boundary $\del \Omega$ exists almost everywhere, the trace operator $\mathrm{Tr}: H^1(\Omega)\to H^{\frac{1}{2}}(\del \Omega)$ is linear continuous and surjective~\cite{LIONS-1972,MARSCHALL-1987,GRISVARD-1974,NECAS-1967} with a linear continuous right inverse, $i.e.$ the extension operator $$E: H^{\frac{1}{2}}(\del \Omega) \to H^1(\Omega) \quad \hbox{is such that }\mathrm{Tr}(E(u))=u.$$
 Moreover, for $u$, $v\in H^1(\Omega)$ with $\Delta u \in L^2(\Omega)$ it holds the usual Green formula in the following sense %of $H^{\frac{1}{2})(\del \Omega)$
 \begin{equation}\label{EQintegByPartsH05}
 \int_\Omega \nabla u v\dx=\langle \frac{\del u}{\del \nu}, \mathrm{Tr}v\rangle _{((H^\frac{1}{2}(\del \Omega))', H^\frac{1}{2}(\del \Omega))}
-\int_\Omega \nabla v \nabla u \dx.	
 \end{equation}
  This formula understands the existence of the normal derivative of $u$ on $\del \Omega$ as the existence of a linear continuous form on $H^\frac{1}{2}(\del \Omega)$, where $H^\frac{1}{2}(\del \Omega)$ is the image of $H^1(\Omega)$ for a Lipschitz domain $\Omega$ by the trace operator. The dual space $(H^\frac{1}{2}(\del \Omega))'$ is usually denoted by $H^{-\frac{1}{2}}(\del \Omega)$.
  
  In this weak way for Lipschitz domains it is also possible to define the operator of divergence for vector valued functions (see for instance Theorem~2.5 $\S~2$~\cite{GIRAULT-1986}) or simply the usual integration by parts  for all $u$ and $v$ from $H^1(\Omega)$ in the following weak sense
      \begin{equation}\label{IPP}
 \langle u \nu_i,v\rangle_{(H^{-\frac{1}{2}}(\del \Omega), H^\frac{1}{2}(\del \Omega))}:= \int_\Omega \frac{\del u}{\del x_i} v\dx+\int_{\Omega} u\frac{\del v}{\del x_i}\dx \quad i=1,\ldots,n,
\end{equation}
where by $u \nu_i$ is denoted the linear continuous functional on $H^\frac{1}{2}(\del \Omega)$.
 
 In the same time thanks to the classical results of Calderon-Stein~\cite{CALDERON-1961,STEIN-1970} it is known that every Lipschitz domain $\Omega$ is an extension domain for the Sobolev space $W_p^k(\Omega)$ with $1\le p\le \infty$, $k\in \N^*$, which means
 %%%%%%%%%%%%%%%%%%%
 %
\begin{definition}\textbf{($W_p^k$-extension domains)}
 A domain $\Omega\subset \R^n$ is called a $W_p^k$-extension domain ($k\in \N^*$) if there exists a bounded linear extension operator $E: W_p^k(\Omega) \to W^k_p(\R^n)$. This means that for all $u\in W_p^k(\Omega)$ there exists a $v=Eu\in  W^k_p(\R^n)$ with $v|_\Omega=u$ and it holds
 $$\|v\|_{W^k_p(\R^n)}\le C\|u\|_{W_p^k(\Omega)}\quad \hbox{with a constant } C>0.$$
\end{definition}

This result was generalized by Jones~\cite{JONES-1981} in the framework of $(\eps,\infty)$-domains which give an optimal class of extension domains in $\R^2$, but not in $\R^3$. More recently the optimal class of extension domains for $p>1$ in $\R^n$ is found by~Haj\l{}as, Koskela and Tuominen~\cite{HAJLASZ-2008}. These results are discussed in Section~\ref{SecExtDom}, where we give all definitions.

  Thanks to works~\cite{WALLIN-1991,JONSSON-1997,LANCIA-2002,BARDOS-2016,ARFI-2017} it is possible to generalize the trace operator for more irregular boundaries, as for instance the $d$-sets or even on sets without a fixed dimension~\cite{JONSSON-2009,HINZ-2020}. The definition of the trace for a regular distribution and different image spaces  giving different Green formulas are presented in Section~\ref{secTraceGreen}. 
  
  But to be able to ensure the weak well-posedness of problem~(\ref{LaplaceeqRobin1}) and also for the associated spectral problem of $-\Delta$, we  need to have in addition the compactness of the inclusion $H^1(\Omega)$ in $L^2(\Omega)$ and the compactness of the trace operator this time considered as an operator from $H^1(\Omega)$ to $L^2(\del \Omega)$.
 Thanks to~\cite{EDMUNDS-1987} Theorem V.4.17 it is known that if a domain $\Omega$ has a continuous boundary (in the sense of graphs, see~\cite{EDMUNDS-1987} Definition V.4.1) then $H^1(\Omega)$ is compactly embedded in $L^2(\Omega)$. The general $d$-set boundaries with $d>n-1$, as for instance a von Koch curve, does not satisfy the assumption to have a continuous boundary. In our article~\cite{ARFI-2017} this fact was proven in the framework of admissible domains with a $d$-set boundary. Here we prove it also for more general boundaries as in~\cite{JONSSON-1994,JONSSON-2009} (see Section~\ref{SecTh-RK}). We hence update the concept of admissible domains firstly introduced in~\cite{ARFI-2017} following the same idea to introduce the class of all Sobolev extension domains with boundaries on which it is possible to define a surjective linear continuous trace operator with linear continuous right inverse. To insist on their extension nature, we thus call these domains Sobolev admissible domains (see Definition~\ref{DefAdmis}).
 
 %%%%%%%%%%%%%%%%%
 %It is in particular related with the concept of  Sobolev admissible domains $\Omega$ containing the Sobolev extension domains for which it is possible to define a continuous trace operator on their boundaries $\partial \Omega$ with a continuous right inverse operator. 
 The most common examples of Sobolev admissible domains are domains with regular or Lipschitz boundaries, with a $d$-set boundaries as Von Koch fractals or with a ``mixed'' boundary presented for instance by a three-dimensional cylindrical domain constructed on a base of a two-dimensional domain with a $d$-set boundary  as considered for the Koch snowflake base in~\cite{LANCIA-2010,ARXIV-CREO-2018}. 
 %%%%%%%%%%%%%%%
 
 The generalization of the Kondrachov-Rellich theorem in the framework of Sobolev admissible domains allows to extend the compactness studies of the trace from~\cite{ARENDT-2011} and to update the results of~\cite{ARFI-2017} (see Section~\ref{secCompTr}): for a Sobolev admissible domain with a compact boundary the trace operator considered from $H^1(\Omega)$ to $L^2(\del \Omega)$ is compact.
 
 Thus, as for the usual Lipschitz bounded case, the problem~(\ref{LaplaceeqRobin1}) is weakly well-posed and the corresponding spectral problem have a countable number of eigenvalues going to $+\infty$ with the eigenfunctions forming an orthogonal basis in $H^1(\Omega)$ which becomes an orthonormal basis in $L^2(\Omega)$ by the classical Hilbert-Schmidt theorem for compact auto-adjoint operators on a Hilbert space (see Section~\ref{secApplic}).
 
 To summarize, the rest of the paper is organized as follows. In Section~\ref{SecExtDom}
we present recent results on Sobolev extension domains. In Section~\ref{secTraceGreen} we firstly define the trace operator on a $d$-set in sub-Section~\ref{ss-Dset}  and secondly in sub-Section~\ref{ss-Gen} we give the analogous results in a more abstract measure framework which are not necessarily $d$-dimensional. We finish the section by a generalization of the Green formula and of the formula of the integration by parts for the abstract measure framework in sub-Section~\ref{ss-Green}. Using the results on the trace and on the extension operators, we introduce the concept of Sobolev admissible domains in Section~\ref{SecTh-RK} and generalize the Rellich-Kondrachov theorem. In Section~\ref{secCompTr}  we continue the generalization and show  the compactness of the trace operator considered this time as an operator mapping not on its image, but in $L_p(\del \Omega)$. Section~\ref{secApplic} gives an example of the application of obtained theorems by showing the well-posedness of the Poisson problem~(\ref{LaplaceeqRobin1}) on the $H^1$-Sobolev admissible domains with a standard notation $W^1_2(\Omega)=H^1(\Omega)$.

\section{Sobolev extension domains}\label{SecExtDom}
Following~\cite{ARFI-2017}, let us start by recalling the classical results of Calderon-Stein~\cite{CALDERON-1961,STEIN-1970}:
every Lipschitz domain $\Omega$ is an extension domain for $W_p^k(\Omega)$ with $1\le p\le \infty$, $k\in \N^*$.
This result was generalized by Jones~\cite{JONES-1981} in the framework of $(\eps,\delta)$-domains:
\begin{definition}\label{DefEDD}\textbf{($(\eps,\delta)$-domain~\cite{JONES-1981,JONSSON-1984,WALLIN-1991})}
An open connected subset $\Omega$ of $\R^n$ is an $(\eps,\delta)$-domain, $\eps > 0$, $0 < \delta \leq \infty$, if whenever $x, y \in \Omega$ and $|x - y| < \delta$, there is a rectifiable arc $\gamma\subset \Omega$ with length $\ell(\gamma)$ joining $x$ to $y$ and satisfying
\begin{enumerate}
 \item $\ell(\gamma)\le \frac{|x-y|}{\eps}$  (thus locally quasiconvex) and
 \item $d(z,\del \Omega)\ge \eps |x-z|\frac{|y-z|}{|x-y|}$ for $z\in \gamma$.
\end{enumerate}
\end{definition}
As the constant $\delta$ is allowed to be equal to $+\infty$ it is possible to avoid the local character of this definition and in this case just to say $(\eps,\infty)$-domain. Definition~\ref{DefEDD} without assumption~2 gives the definition of a locally quasiconvex domain. Assumption~2 does not allow the boundary to collapse into an infinitely thing structures, as for instance happens in the fractal threes. Actually it is the reason why the fractal threes~\cite{ACHDOU-2013} are not $(\eps,\infty)$-domains.

The $(\eps,\delta)$-domains  are also called locally uniform domains~\cite{HERRON-1991}.
Actually, bounded locally uniform domains, or  bounded $(\eps,\delta)$-domains, are equivalent (see~\cite{HERRON-1991} point 3.4) to the uniform domains, firstly defined by Martio and Sarvas in~\cite{MARTIO-1979}, for which there are no more restriction $|x-y|<\delta$  (see Definition~\ref{DefEDD}).% two points of $\Omega$.

Thanks to Jones~\cite{JONES-1981}, it is known that any $(\eps,\delta)$-domain in $\R^n$ is a $W_p^k$-extension domain for all $1\le p\le\infty$ and $k\in \N^*$. Moreover, for a bounded finitely connected domain $\Omega\subset \R^2$, Jones~\cite{JONES-1981} proved that
\begin{multline*}
	\Omega \hbox{ is a } W_p^k\hbox{-extension domain } (1\le p\le\infty \hbox{ and }k\in \N^*) \Longleftrightarrow \\
	\Omega \hbox{ is an } (\eps,\infty)\hbox{-domain for some } \eps>0 \Longleftrightarrow\\
	\hbox{ the boundary }\del \Omega \hbox{ consists of finite number of points and quasi-circles.}
\end{multline*}

However, it is no more true for $n\ge3$, $i.e.$ there are $W_p^1$-extension domains which are not locally uniform~\cite{JONES-1981} (in addition, an $(\eps,\delta)$-domain in $\R^n$ with $n\ge 3$ is not necessary a quasi-sphere).

To discuss general properties of  locally uniform domains, let us introduce Ahlfors $d$-regular sets or $d$-sets:
\begin{definition}\label{Defdset}\textbf{(Ahlfors $d$-regular set or $d$-set~\cite{JONSSON-1984,JONSSON-1995,WALLIN-1991,TRIEBEL-1997})}
Let $F$ be a closed Borel non-empty subset of $\R^n$. The set $F$ is is called a $d$-set ($0<d\le n$) if there exists a $d$-measure  $\mu$ on $F$, $i.e.$ a positive Borel measure with support $F$ ($\operatorname{supp} \mu=F$) such that there exist constants 
$c_1$, $c_2>0$,
\begin{equation*}
 c_1r^d\le \mu(\overline{B_r(x)})\le c_2 r^d, \quad \hbox{ for  } ~ \forall~x\in F,\; 0<r\le 1,
 \end{equation*}
where $B_r(x)\subset \R^n$ denotes the Euclidean ball centered at $x$ and of radius~$r$.
\end{definition}
As~\cite[Prop.~1, p~30]{JONSSON-1984} all $d$-measures on a fixed $d$-set $F$ are equivalent, it is also possible to define a $d$-set by the $d$-dimensional Hausdorff measure $m_d$:
 \begin{equation*}
 c_1r^d\le m_d(F\cap \overline{B_r(x)})\le c_2 r^d, \quad \hbox{ for  } ~ \forall~x\in F,\; 0<r\le 1
 \end{equation*}
 which in particular implies that $F$ has Hausdorff dimension $d$ in the neighborhood of each point of $F$~\cite[p.33]{JONSSON-1984}.

If the boundary $\del \Omega$ is a $d$-set endowed with the $d$-dimensional Hausdorff measure restricted to $\del \Omega$, then we denote by $L_p(\del \Omega, m_d)$ the Lebesgue space defined with respect to this measure with the norm
$$\|u\|_{L_p(\del \Omega,m_d)}=\left(\int_{\del \Omega} |u|^p d m_d \right)^\frac{1}{p} .$$

From~\cite{WALLIN-1991}, it is known that
\begin{itemize}
 \item All $(\eps,\delta)$-domains in $\R^n$ are $n$-sets ($d$-set with $d=n$):
 $$\exists c>0\quad \forall x\in \overline{\Omega}, \; \forall r\in]0,\delta[\cap]0,1] \quad \lambda(B_r(x)\cap \Omega)\ge C\lambda(B_r(x))=cr^n,$$
 where $\lambda(A)$ denotes the Lebesgue measure of a set $A$ in $\R^n$. This property is also called the measure density condition~\cite{HAJLASZ-2008}. Let us notice that an $n$-set
$\Omega$ cannot be ``thin'' close to its boundary $\del \Omega$, since it must all times contain a non trivial ball in its neighborhood.
 \item If $\Omega$ is  an $(\eps,\delta)$-domain and $\del \Omega$ is a $d$-set ($d<n$) then $\overline{\Omega}=\Omega\cup \del \Omega$ is an $n$-set.
\end{itemize}
In particular, a Lipschitz domain $\Omega$ of $\R^n$ is an $(\eps,\delta)$-domain and also an $n$-set~\cite{WALLIN-1991}. But not every $n$-set is an $(\eps,\delta)$-domain: adding an in-going cusp to an $(\eps,\delta)$-domain we obtain an $n$-set which is not an $(\eps,\delta)$-domain anymore.
Self-similar fractals (e.g., von Koch's snowflake domain) are examples of $(\eps,\infty)$-domains with the $d$-set boundary~\cite{CAPITANELLI-2010,WALLIN-1991}, $d>n-1$.

%If $F$ preserves Markov's inequality (thus $0<d<n$)

Recently, Haj\l{}asz, Koskela and Tuominen~\cite{HAJLASZ-2008} have proved that every $W_p^k$-extension domain in $\R^n$ for $1\le p <\infty$ and $k\ge 1$, $k\in \N$ is an $n$-set.
In addition they proved the following statements:
\begin{theorem}\label{ThHajlasz}
	\begin{enumerate}
  \item[(i)] A domain $\Omega\subset \R^n$ is a $W^1_\infty$-extension domain if and only if $\Omega$ is uniformly locally quasiconvex.
 \item[(ii)] For $1<p <\infty$, $k=1,2,...$ a domain $\Omega\subset \R^n$ is a $W^k_p$-extension domain if and only if  $\Omega$ is an $n$-set and $W_p^k(\Omega)=C_p^k(\Omega)$ (in the sense of equivalent norms).
\end{enumerate}
\end{theorem}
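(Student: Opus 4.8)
The plan is to follow the circle of ideas of Haj\l asz, Koskela and Tuominen, focusing on the model case $k=1$ (for $k\ge 2$ one simply replaces constants by polynomials of degree $<k$). The thread running through both parts is the pointwise, ``Calder\'on'' description of Sobolev functions: $u\in W_p^k(\R^n)$ if and only if $u\in L^p(\R^n)$ and there are $g\in L^p(\R^n)$ and, for a.e.\ $y$, a polynomial $P_y$ of degree $<k$ with $|u(x)-P_y(x)|\le |x-y|^k\,(g(x)+g(y))$ for a.e.\ $x$; one may take $g=M(\nabla^k u)$, the Hardy--Littlewood maximal function of the top-order derivatives, which belongs to $L^p(\R^n)$ precisely because $p>1$. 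By definition $C_p^k(\Omega)$ is the set of $u\in L^p(\Omega)$ for which the same inequality holds with $x,y$ restricted to $\Omega$ and some $g\in L^p(\Omega)$, the norm being $\|u\|_{L^p(\Omega)}+\inf\|g\|_{L^p(\Omega)}$.

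Part (i). For necessity I would test the (assumed) extension operator on the truncated intrinsic distance functions $u_{x_0,R}(x)=\min\{R,\,d_\Omega(x_0,x)\}$, which lie in $W^1_\infty(\Omega)$ with $\|\nabla u_{x_0,R}\|_{L^\infty}\le 1$: since $Eu_{x_0,R}$ is globally Lipschitz and equals $u_{x_0,R}$ on $\Omega$, one gets $d_\Omega(x_0,x)\le C|x_0-x|$ for $x$ near $x_0$, with $C$ governed by the operator norm, and a contradiction argument that sums suitably scaled such bumps along a hypothetical family of ``bad'' pairs upgrades this to uniform local quasiconvexity. For sufficiency, uniform local quasiconvexity lets one identify $W^1_\infty(\Omega)$ with the space of functions that are Lipschitz, with one common constant, on each ball of a fixed countable cover of $\overline\Omega$ (integrate a good representative of $\nabla u$ along quasigeodesics). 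I would then extend by a Whitney--McShane gluing: a Whitney decomposition of $\R^n\setminus\overline\Omega$, the McShane extension from the nearest patch of $\Omega$ on each Whitney cube, and a Lipschitz partition of unity; the uniformity of the local constants is exactly what keeps the patched function globally Lipschitz with controlled norm.

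Part (ii). That a $W_p^k$-extension domain is an $n$-set is the theorem of Haj\l asz--Koskela--Tuominen already quoted above (a Lebesgue-density argument shows that if the measure density condition fails then there are non-extendable $W_p^k$ functions). Granting this, the identity $W_p^k(\Omega)=C_p^k(\Omega)$ splits into two inclusions. The inclusion $W_p^k(\Omega)\hookrightarrow C_p^k(\Omega)$ comes from restricting Calder\'on's pointwise inequality for $Eu$ on $\R^n$ down to $\Omega$, with $g=M(\nabla^k(Eu))$. The reverse inclusion $C_p^k(\Omega)\hookrightarrow W_p^k(\Omega)$ is a self-improvement argument: integrating the pointwise inequality over nearby pairs of points of $\Omega$ and using the measure density condition --- so that balls centred on $\overline\Omega$ carry a definite fraction of Lebesgue measure --- produces Poincar\'e-type oscillation bounds and, by telescoping, weak derivatives up to order $k$ in $L^p(\Omega)$. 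Finally, for sufficiency, given $\Omega$ an $n$-set with $W_p^k(\Omega)=C_p^k(\Omega)$, I would build $Eu$ by the Jones--Whitney scheme: decompose $\R^n\setminus\overline\Omega$ into Whitney cubes $Q$; to each $Q$ attach a ball $B_Q$ of radius and distance comparable to $\operatorname{diam} Q$ with $|B_Q\cap\Omega|\gtrsim(\operatorname{diam} Q)^n$ (possible since $\Omega$ is an $n$-set); let $P_Q$ be a near-best $L^p(B_Q\cap\Omega)$ approximation of $u$ by polynomials of degree $<k$; and put $Eu=u$ on $\Omega$, $Eu=\sum_Q\varphi_QP_Q$ off $\overline\Omega$ for a smooth partition of unity $\{\varphi_Q\}$ subordinate to slightly dilated cubes. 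It then remains to prove $\|Eu\|_{W_p^k(\R^n)}\lesssim\|u\|_{C_p^k(\Omega)}\simeq\|u\|_{W_p^k(\Omega)}$.

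The main obstacle is exactly this last estimate, together with the self-improvement step on the other side. Differentiating $\sum_Q\varphi_QP_Q$, the terms where derivatives of $\varphi_Q$ land on $P_Q-P_{Q'}$ for adjacent cubes force one to control differences of polynomial pieces attached to neighbouring Whitney cubes \emph{only} through the $C_p^k$-data; this runs through a chaining argument along the balls $B_Q$, and the resulting discrete sum over the boundedly overlapping Whitney cubes must then be turned into $\|g\|_{L^p(\Omega)}^p$ using the lower bound $|B_Q\cap\Omega|\gtrsim(\operatorname{diam} Q)^n$ --- any loss there costs a power of the scale and the sum diverges. Symmetrically, in the necessity half of (ii) the measure density condition is precisely what makes the chain of balls inside $\Omega$ long and fat enough to convert the pointwise Calder\'on inequality into honest $L^p$ weak derivatives; drop it and that argument collapses.
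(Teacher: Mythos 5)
There is nothing in the paper to compare against: Theorem~\ref{ThHajlasz} is not proved here at all, it is imported verbatim from Haj\l{}asz--Koskela--Tuominen \cite{HAJLASZ-2008}, so what you have written is a sketch of (your reconstruction of) their argument rather than an alternative to anything in this survey. As a reconstruction it follows the right circle of ideas (Calder\'on-type pointwise inequalities, measure density, a Jones--Whitney extension with near-best polynomial fits on reflected balls), but as a proof it has genuine gaps, which you yourself flag as ``the main obstacle'' and ``it then remains to prove''. Concretely: (a) you work with the pointwise definition $|u(x)-P_y(x)|\le |x-y|^k(g(x)+g(y))$, whereas $C_p^k(\Omega)$ in the statement (and in the paper) is defined via the fractional sharp maximal function $f^{\sharp}_{k,\Omega}$; the equivalence of these descriptions is Calder\'on's theorem on $\R^n$ and is not automatic on an arbitrary domain, so it must either be proved under the measure density condition or avoided --- using it without comment makes both inclusions of (ii) circular in the hard direction. (b) The decisive estimate $\|Eu\|_{W_p^k(\R^n)}\lesssim\|u\|_{C_p^k(\Omega)}$ for the Whitney--Jones operator is exactly the content of the sufficiency proof; the chaining comparison of the polynomials $P_Q$, $P_{Q'}$ attached to neighbouring cubes through the fat balls $B_Q\cap\Omega$ is the heart of the matter, and for $k\ge 2$ it is not a cosmetic replacement of constants by polynomials: one needs Markov/Remez-type inequalities to control all coefficients of a polynomial from its $L^p$ size on a set of positive density, plus a telescoping argument producing the intermediate derivatives. (c) The necessity of the $n$-set condition is likewise a real construction in \cite{HAJLASZ-2008} (explicit test functions concentrated where the density degenerates), not the one-line Lebesgue-density remark you give; and in (i) your distance-function argument only yields quasiconvexity for pairs with small \emph{intrinsic} distance, so upgrading it to the uniform local statement in the Euclidean metric needs the summing argument you allude to but do not carry out. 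In short, the outline is faithful to the strategy of the cited proof, but none of the three steps that make the theorem nontrivial is actually established.
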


%In addition, they proved that any $n$-set, for which $W_p^k(\Omega)=C_p^k(\Omega)$ (with norms' equivalence), is a $W_p^k$-extension domain for $1<p<\infty$ (see~\cite{HAJLASZ-2008} also for the results for $p=1$ and $p=\infty$).
By $C_p^k(\Omega)$ is denoted the space of the fractional sharp maximal functions:
\begin{definition}
For a set $\Omega\subset \R^n$ of positive Lebesgue measure, \begin{multline*}
                                                              C_p^k(\Omega)=\{f\in L_p(\Omega)|\\
                                                              f_{k,\Omega}^\sharp(x)=\sup_{r>0} r^{-k}\inf_{P\in \mathcal{P}^{k-1}}\frac{1}{\lambda(B_r(x))}\int_{B_r(x)\cap \Omega}|f-P|\dy\in L^p(\Omega)\}
                                                             \end{multline*}
%$$C_p^k(\Omega)$$$$=\{f\in L_p(\Omega)|f_{k,\Omega}^\sharp(x)=\sup_{r>0} r^{-k}\inf_{P\in \mathcal{P}^{k-1}}\frac{1}{\lambda(B_r(x))}\int_{B_r(x)\cap \Omega}|f-P|\dy\in L^p(\Omega)\}$$ 
with the norm $\|f\|_{C_p^k(\Omega)}=\|f\|_{L_p(\Omega)}+\|f_{k,\Omega}^\sharp\|_{L_p(\Omega)}.$ By $\mathcal{P}^{k-1}$ is denoted the space of polynomials of the order $k-1$.
\end{definition}

From~\cite{JONES-1981} and~\cite{HAJLASZ-2008} we directly have~\cite{ARFI-2017}
\begin{corollary}
 Let $\Omega$ be a bounded finitely connected domain in $\R^2$ and $1<p<\infty$, $k\in \N^*$. The domain $\Omega$ is a $2$-set with $W_p^k(\Omega)=C_p^k(\Omega)$ (with norms' equivalence) if and only if $\Omega$ is an $(\eps,\delta)$-domain and its boundary $\del \Omega$ consists of a finite number of points and quasi-circles.
\end{corollary}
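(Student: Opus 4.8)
The plan is to chain together the two results already cited, namely Jones's characterization~\cite{JONES-1981} of $W_p^k$-extension domains among bounded finitely connected planar domains and the criterion of Haj\l{}asz, Koskela and Tuominen~\cite{HAJLASZ-2008} recalled in Theorem~\ref{ThHajlasz}(ii). No new estimates are needed: the proof is a two-way implication obtained by composing these two equivalences.

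First I would treat the forward direction. Assume $\Omega$ is a $2$-set with $W_p^k(\Omega)=C_p^k(\Omega)$ (equivalent norms). Since $n=2$ and $1<p<\infty$, $k\in\N^*$, Theorem~\ref{ThHajlasz}(ii) says precisely that $\Omega$ is a $W_p^k$-extension domain. As $\Omega$ is bounded and finitely connected, Jones's planar equivalence quoted above applies and gives that $\Omega$ is an $(\eps,\infty)$-domain for some $\eps>0$ and that $\del\Omega$ is a finite union of points and quasi-circles. Because $\Omega$ is bounded, an $(\eps,\infty)$-domain is a fortiori an $(\eps,\delta)$-domain for every $\delta\in(0,\infty]$, so the geometric conclusion is exactly the one stated.

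For the converse, assume $\Omega$ is an $(\eps,\delta)$-domain whose boundary consists of finitely many points and quasi-circles. Boundedness lets us regard $\Omega$ as an $(\eps',\infty)$-domain (a bounded $(\eps,\delta)$-domain is a uniform domain, hence $(\eps',\infty)$, by~\cite{HERRON-1991}), so Jones's theorem~\cite{JONES-1981} yields that $\Omega$ is a $W_p^k$-extension domain for all $1\le p\le\infty$ and $k\in\N^*$, in particular for the prescribed pair $(p,k)$. Invoking the ``only if'' part of Theorem~\ref{ThHajlasz}(ii) --- together with the fact of~\cite{HAJLASZ-2008} that every $W_p^k$-extension domain in $\R^n$ with $1\le p<\infty$ is an $n$-set --- we obtain that $\Omega$ is a $2$-set and that $W_p^k(\Omega)=C_p^k(\Omega)$ with equivalence of norms. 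This closes the equivalence.

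The argument is essentially bookkeeping; the only points requiring attention are: checking that the standing hypotheses of the corollary (bounded, finitely connected, $1<p<\infty$) match exactly those under which Jones's planar characterization and Theorem~\ref{ThHajlasz}(ii) are valid, so that both equivalences may be applied without loss; and the harmless reconciliation of the ``$(\eps,\delta)$'' and ``$(\eps,\infty)$'' terminology, which for bounded domains is the equivalence with uniform domains recalled in Section~\ref{SecExtDom}. Everything else is routine.
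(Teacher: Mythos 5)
Your proof is correct and is exactly the argument the paper intends: the corollary is stated there as a direct consequence of Jones's planar characterization of $W_p^k$-extension domains and Theorem~\ref{ThHajlasz}(ii), chained in both directions just as you do, with the bounded $(\eps,\delta)$/$(\eps,\infty)$ reconciliation via uniform domains being the only point needing a remark.
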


The question about $W^k_p$-extension domains is equivalent to the question of the continuity of the trace operator  $\mathrm{Tr}: W^k_p(\R^n) \to W^k_p(\Omega)$, the trace operator on the domain $\Omega$.
In the next section we introduce the notion of the trace on any Borel set which we use taking the trace to the boundary.
\section{Trace on the boundary and Green formulas}\label{secTraceGreen}

\subsection{Framework of $d$-sets and Markov's local inequality}\label{ss-Dset}

From~\cite{JONSSON-1984} p.39, it is also known that all closed $d$-sets with $d>n-1$ preserve Markov's local inequality:
\begin{definition}\textbf{(Markov's local inequality)}
A closed subset $V$ in $\R^n$ preserves Markov's local inequality if for every fixed  $k\in \N^*$, there exists a constant $c=c(V,n,k) > 0$, such that
$$
\max_{V\cap \overline{B_r(x)}} |\nabla P | \le \frac{c}{r}\max_{V\cap \overline{B_r(x)}}|P|
$$
for all polynomials $P \in \mathcal{P}_k$ and all closed balls $\overline{B_r(x)}$, $x \in V$ and $0 < r \le 1$.
\end{definition}

For instance, self-similar sets that are not  subsets of any $(n-1)$-dimensional subspace of $\R^n$, the closure of a domain $\Omega$ with Lipschitz boundary and also $\R^n$ itself preserve Markov's local inequality (see Refs.~\cite{JONSSON-1997,WALLIN-1991}).
The geometrical characterization of sets preserving Markov's local inequality was initially given in~\cite{JONSSON-1984-1} (see Theorem 1.3) and can be simply interpreted as sets which are not too flat anywhere. It can be  illustrated by the following theorem of Wingren~\cite{WINGREN-1988}:
\begin{theorem}
A closed subset $V$ in $\R^n$ preserves Markov's local inequality if and only if there exists a constant $c>0$ such that for every ball
$B_r(x)$ centered in $x\in V$ and with the radius $0 < r \le 1$, there are $n + 1$
affinely independent points $y_i \in V\cap B_r(x)$, $i=1,\ldots,n+1$, such that the $n$-dimensional ball
inscribed in the convex hull of $y_1, y_2, \ldots, y_{n+1}$, has radius not less than $c r$.
\end{theorem}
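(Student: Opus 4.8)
We prove the two implications separately; throughout, $C$ denotes a positive constant depending only on $n$, and on the fixed degree $k$ where indicated.

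\emph{Necessity (Markov $\Rightarrow$ spreading).} The plan is to argue by contraposition and to violate Markov's local inequality already for polynomials of degree one. If the geometric condition fails, then for every $j\in\N^*$ there is a ball $\overline{B_{r_j}(x_j)}$, $x_j\in V$, $0<r_j\le1$, such that every $n+1$ affinely independent points of $V\cap\overline{B_{r_j}(x_j)}$ span a simplex of inscribed radius $<r_j/j$. The first step is an elementary convex–geometry lemma: if a bounded set $K\subset\R^n$ has the property that every simplex on $n+1$ of its points has inradius $\le\delta$, then $K$ lies within distance $C\delta$ of some affine hyperplane $H$ (proved by the greedy choice of $n+1$ points of $K$ maximizing successive distances to the affine spans of those already chosen, which would otherwise produce a fat simplex). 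Applying this to $K=V\cap\overline{B_{r_j}(x_j)}$ with $\delta=r_j/j$ yields a hyperplane $H_j=\{y:\nu_j\cdot(y-p_j)=0\}$, $|\nu_j|=1$, $p_j\in H_j$, with $\operatorname{dist}(y,H_j)\le Cr_j/j$ for all $y\in V\cap\overline{B_{r_j}(x_j)}$. The affine polynomial $P_j(y)=\nu_j\cdot(y-p_j)$ then satisfies $|\nabla P_j|\equiv1$ and $\max_{V\cap\overline{B_{r_j}(x_j)}}|P_j|\le Cr_j/j$, so
\[
\max_{V\cap\overline{B_{r_j}(x_j)}}|\nabla P_j|\;=\;1\;>\;\frac{j}{C\,r_j}\,\max_{V\cap\overline{B_{r_j}(x_j)}}|P_j|,
\]
which contradicts Markov's local inequality (for $k=1$, with its constant) as soon as $j>C$. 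This direction I expect to be routine.

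\emph{Sufficiency (spreading $\Rightarrow$ Markov).} Assume the spreading condition holds with constant $c>0$; fix $k\in\N^*$ and a ball $\overline{B_r(x)}$, $x\in V$, $0<r\le1$. Since the classical Markov inequality on a Euclidean ball gives $\max_{\overline{B_r(x)}}|\nabla P|\le\frac{C(n,k)}{r}\max_{\overline{B_r(x)}}|P|$ for all $P\in\mathcal{P}_k$, it suffices to exhibit a finite set $Z\subset V\cap\overline{B_r(x)}$ that is \emph{norming} for $\mathcal{P}_k$ on the ball, namely
\[
\max_{Z}|P|\;\ge\;\kappa\,\max_{\overline{B_r(x)}}|P|\qquad\text{for all }P\in\mathcal{P}_k,
\]
with $\kappa=\kappa(n,k,c)>0$ independent of $x$ and $r$; combining the two displays and using $Z\subset V\cap\overline{B_r(x)}$ yields $\max_{V\cap\overline{B_r(x)}}|\nabla P|\le\frac{C(n,k)}{\kappa\,r}\max_{V\cap\overline{B_r(x)}}|P|$, i.e.\ Markov's local inequality. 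To build $Z$ one iterates the hypothesis dyadically: apply it at $x$ with radius $r$ to get a simplex $S^{(0)}$ with vertices in $V\cap\overline{B_r(x)}$ and inradius $\ge cr$; apply it again at each vertex with radius $r/2$; continue, and let $Z$ be the union of all vertices down to a depth $m=m(n,k)$. A single fat simplex is affinely equivalent to the standard one with distortion controlled by $c$, so it norms only affine polynomials; the point is that finitely many dyadic scales suffice to norm all of $\mathcal{P}_k$, which is proved by induction on $k$ via a rescaling/compactness argument using the finite-dimensionality of $\mathcal{P}_k$ (were there no uniform $\kappa$, one could translate and dilate, pass to a closed limit set still obeying the spreading condition, and obtain a nonzero polynomial of degree $\le k$ vanishing on it; fatness at the top scale forces such a polynomial to contain a nontrivial affine factor, reducing the degree and contradicting the inductive hypothesis).

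\emph{Main obstacle.} The delicate step is exactly the construction of a $\mathcal{P}_k$-norming subset with a constant independent of the ball: the dyadic simplices are \emph{spanned} by points of $V$ but are not contained in $V$, so a polynomial is controlled only at their vertices and this control must be transported across scales. The uniformity in $x$ and $r$ — equivalently, the scale invariance of the spreading hypothesis for radii $\le1$ — is what legitimizes the compactness reduction. Once the norming estimate is secured, the remaining assembly (including the passage through the classical ball Markov inequality) is bookkeeping.
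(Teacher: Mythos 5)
The paper does not actually prove this statement; it is quoted as Wingren's theorem with a citation to \cite{WINGREN-1988}, so your argument can only be judged on its own terms. Your necessity direction is essentially sound: the contraposition via an affine polynomial is the standard route, and your convex-geometry lemma is true, although the greedy selection alone does not obviously yield a fat simplex (a long thin simplex can have one large last height and still small inradius unless you track all heights). The clean way is Steinhagen's inequality (minimal width of $\operatorname{conv}K$ controlled by its inradius) together with the maximal-volume simplex, whose vertices lie in $K$ and whose inradius is comparable, up to a dimensional factor, to that of $\operatorname{conv}K$; even the cruder bound obtainable from the greedy points (inradius at least $c_n t^n/r^{n-1}$ when no hyperplane is within $t$ of $K\subset \overline{B_r(x)}$) suffices for the contradiction. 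These are repairable details.

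The sufficiency direction, however, has a genuine gap at its central step: ``fatness at the top scale forces such a polynomial to contain a nontrivial affine factor, reducing the degree.'' This principle is false. The circle $\{x^2+y^2=1\}$ in $\R^2$ contains the vertices of an equilateral (hence fat) triangle at the unit scale, yet $x^2+y^2-1$ is irreducible over $\R$ and has no affine factor; in general a nonzero $P\in\mathcal{P}_k$ vanishing at the $n+1$ vertices of a fat simplex (or on a much larger set) need not factor at all. What actually prevents a nonzero $P$ of degree at most $k$ from vanishing on the Hausdorff-limit set $F$ is the spreading condition at \emph{arbitrarily small scales around points of $F$}, not fatness at the top scale: blow up at a point $x\in F$ where $P$ vanishes to some order $j$ with $1\le j\le k$; the rescaled copies of $F$ accumulate on a closed spreading set contained in the zero set of the lowest-order homogeneous part $P_j$; if $j=1$ the fat simplex contradicts affine independence of points near a hyperplane, and if $j\ge2$ one repeats at a point of the new set where the vanishing order strictly drops (the exceptional case, $P_j$ invariant under translations along a proper subspace containing the set, again contradicts the existence of $n+1$ affinely independent points). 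This blow-up induction (or, alternatively, the interpolation-type construction used by Wingren and Jonsson--Wallin) is the missing degree-reduction mechanism. Two further points need attention: the rescaled sets inherit the spreading condition only at scales that keep the relevant balls inside the reference ball, which must be handled in the limit argument; and your finite dyadic vertex set $Z$ of depth $m(n,k)$ is never shown to be norming --- your compactness sketch concerns the full rescaled trace of $V$, not $Z$ --- so either abandon $Z$ or prove the finite-set statement separately. As written, the harder implication is not established.
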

Smooth manifolds in $\R^n$ of dimension less than $n$, as for instance a sphere, are examples of ``flat'' sets not preserving Markov's local inequality. More precisely, the sets $F$ which do not preserve Markov's inequality~\cite[Thm.~2, p.38]{JONSSON-1984} are
exactly the sets satisfying the geometric condition in the
following theorem. % , saying that for every $\eps > 0$
%there shall exist a ball $B_r(x_0)$, $x_0\in F$, $0< r\le 1$, such
%that $В_r(x_0) \cap F$ is contained in a band of width $2\eps r$.
\begin{theorem}
	 A closed, non-empty subset $F$ of $\R^n$ preserves
Markov's inequality if and only if the following geometric
condition does not hold: 
for every $\eps > 0$ there exists a ball
$B_r(x_0)$, $x_0\in F$, $0< r\le 1$, so that $B_r(x_0)\cap F$  is contained in
some band of type $\{ x\in \R^n|\; (b,x-x_0)_{\R^n}< \eps r \}$, where $b\in  \R^n$,
$|b| = 1$, and $(b,x-x_0)_{\R^n}$  is the scalar product of $b$ and $x - x_0$.
\end{theorem}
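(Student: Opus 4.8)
The plan is to deduce the theorem from Wingren's theorem above by showing that the ``flatness'' condition stated here is exactly the negation of Wingren's ``fat simplex in every ball'' condition; Markov's inequality itself then plays no further role. Throughout I read the band $\{x:(b,x-x_0)_{\R^n}<\eps r\}$ as the symmetric slab $\{x:|(b,x-x_0)_{\R^n}|<\eps r\}$ of half-width $\eps r$ through $x_0$ (this is the usual notion of flatness and is what makes the statement correct: a one-sided half-space would be satisfied, e.g., by a closed half-space, which \emph{does} preserve Markov's inequality). Denote by $\Phi$ the resulting flatness condition and, for $c>0$, by $\Psi_c$ the statement ``every ball $B_r(x_0)$ with $x_0\in F$, $0<r\le1$, contains $n+1$ affinely independent points of $F$ whose convex hull contains an $n$-dimensional ball of radius $\ge cr$''. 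By Wingren's theorem, $F$ preserves Markov's inequality iff $\Psi_c$ holds for some $c>0$, so it suffices to prove that $\Psi_c$ holds for some $c>0$ if and only if $\Phi$ fails.

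If some $\Psi_c$ holds, then $\Phi$ fails: for any admissible ball the $n+1$ points supplied by $\Psi_c$ cannot lie in a slab of half-width $<cr$ through $x_0$, since the convex hull of such a slab-confined set stays in the slab and cannot contain a ball of radius $\ge cr$; hence no admissible ball is $(c/2)$-flat. (One also sees this directly, bypassing Wingren: $\Phi$ forces the Markov inequality to fail already for $k=1$, by testing it on the affine polynomial $P(x)=(b,x-x_0)_{\R^n}$, for which $\max|\nabla P|=1$ while $\max_{F\cap\overline{B_{r/2}(x_0)}}|P|\le\eps r$, and letting $\eps\downarrow0$.)

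Conversely, suppose $\Phi$ fails, so there is $\eps_0>0$ such that for no admissible ball $B_r(x_0)$ is $F\cap B_r(x_0)$ contained in a slab of half-width $\eps_0 r$ through $x_0$. Fix such a ball, set $v_0:=x_0\in F$, and choose the $v_i$ greedily: given affinely independent $v_0,\dots,v_j\in F\cap B_r(x_0)$ with $j<n$, their affine hull $A_j$ passes through $x_0$; extend it to a hyperplane $H\ni x_0$. Since $F\cap B_r(x_0)$ is not contained in the slab $\{x:d(x,H)<\eps_0 r\}$, there is $v_{j+1}\in F\cap B_r(x_0)$ with $d(v_{j+1},A_j)\ge d(v_{j+1},H)\ge\eps_0 r$; in particular $v_0,\dots,v_{j+1}$ remain affinely independent. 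After $n$ steps one obtains $v_0,\dots,v_n\in F\cap B_r(x_0)$ whose $n$ successive ``heights'' are all $\ge\eps_0 r$ while the diameter of the simplex is $\le2r$; since then $\mathrm{Vol}_n\ge(\eps_0 r)^n/n!$ whereas the total facet area is $\le(n+1)(2r)^{n-1}/(n-1)!$, the inscribed ball has radius $\ge c(n,\eps_0)\,r>0$. Thus $\Psi_c$ holds with $c=c(n,\eps_0)$, which closes the equivalence and hence proves the theorem.

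I expect the only delicate point to be the bookkeeping with the constants $\eps$ and $c$ so that the passage through Wingren's theorem is airtight; the geometry — extracting one fresh ``height'' from each failure of flatness, and converting lower bounds on the heights into a lower bound on the inradius — is routine. A genuine obstacle appears only if one insists on avoiding Wingren's theorem: then the hard direction becomes the implication $\Psi_c\Rightarrow$ (Markov's inequality for \emph{every} $k$), which requires upgrading the single fat simplex in each ball to a $\mathcal{P}_k$-unisolvent configuration by iterating $\Psi_c$ across the dyadic scales $r,r/2,r/4,\dots$ and controlling the associated Lagrange (Lebesgue) constants uniformly in the ball — but this is exactly what Wingren's theorem already packages for us.
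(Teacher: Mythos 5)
Your argument is sound, but note that the paper itself gives no proof of this statement: it is quoted (as a known result) from Jonsson--Wallin \cite[Thm.~2, p.38]{JONSSON-1984}, just as Wingren's criterion preceding it is quoted from \cite{WINGREN-1988}. What you actually prove is the purely geometric equivalence between these two cited criteria: the negation of the $\eps$-flatness condition is equivalent to Wingren's ``fat inscribed simplex in every ball'' condition, with explicit constants in both directions (your greedy construction --- one new vertex at distance $\ge \eps_0 r$ from a hyperplane through $x_0$ containing the previous affine hull, then the volume and inradius bookkeeping, which indeed yields an inradius $\ge \eps_0^n r/((n+1)2^{n-1})$ --- is correct, as is the converse with $\eps=c/2$), and your direct $k=1$ test with $P(x)=(b,x-x_0)_{\R^n}$ gives the easy implication ``Markov $\Rightarrow$ no arbitrarily flat balls'' without Wingren at all. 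Two remarks. First, your reading of the band as the symmetric slab $\{x:\ |(b,x-x_0)_{\R^n}|<\eps r\}$ is the correct one and matches Jonsson--Wallin's original statement; as printed in the paper (one-sided inequality) the theorem would be false, with a closed half-space as counterexample, so flagging and repairing this is a genuine catch rather than a cosmetic choice. Second, be aware of what your route does and does not buy: the analytically hard content --- that a uniform fat-simplex condition forces Markov's inequality for every degree $k$ --- is entirely imported through Wingren's theorem, whose proof in the literature is itself intertwined with the Jonsson--Wallin characterization; within the logical structure of this survey, where Wingren's theorem is taken as known, your deduction is legitimate and even clarifying, but it should be presented as a derivation of one cited theorem from the other, not as an independent proof of the Markov characterization.
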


The interest to work with $d$-sets boundaries preserving  Markov's inequality (thus $0<d<n$), related in~\cite{BOS-1995} with the Sobolev-Gagliardo-Nirenberg inequality, is to ensure~\cite[2.1]{WALLIN-1982} that there exists a bounded linear extension operator $\hat{E}$ of the Hölder space $C^{k-1,\alpha-k+1}(\del \Omega)$ to the Hölder space $C^{k-1,\alpha-k+1} (\R^n)$, where for $k\in \N^*$ $k-1<\alpha\le k$ (see also~\cite[p.~2]{JONSSON-1984}). This  allows  to show the existence of a linear continuous extension  of the Besov space
$B^{p,p}_{\alpha}(\del \Omega)$ on $ \del \Omega$ to the Sobolev space $W^k_p(\R^n)$ with $\alpha=k - \frac{(n - d)}{p}\ge 1$ and $k\ge 2$~\cite{JONSSON-1997}. For the extensions of minimal regularity with $k=1$, and thus with $\alpha<1$
(see in addition the definition of the Besov space Def.~3.2 in~\cite{ARXIV-IHNATSYEVA-2011}  with the help of the normalized local best approximation in the class of polynomials $P_{k-1}$ of the degree equal to $k-1$) Markov's inequality is trivially satisfied for $k=1$ on all closed sets of $\R^n$, and hence we do not need to impose it~\cite[p.~198]{JONSSON-1997}. %

But before to explain the mentioned results, let us  generalize the notion of the trace:
\begin{definition}\label{DefGTrace}
 For an arbitrary open set $\Omega$ of $\R^n$, the trace operator $\mathrm{Tr}$ is defined~\cite{JONSSON-1984} for $u\in L_1^{loc}(\Omega)$ by
$$
 \mathrm{Tr} u(x)=\lim_{r\to 0} \frac{1}{\lambda(\Omega\cap B_r(x))}\int_{\Omega\cap B_r(x)}u(y)dy,
$$
where $\lambda$ denotes the Lebesgue measure on $\R^n$.
The trace operator $\mathrm{Tr}$ is considered for all $x\in\overline{\Omega}$ for which the limit exists.
\end{definition}

Using this trace definition it holds the trace theorem on closed $d$-sets~\cite{JONSSON-1984} Ch.VII and~\cite{WALLIN-1991} Proposition~4, in which we think it is important to precise that the closed set $F$ should preserve Markov's local inequality not necessarily for all $k\in \N^*$, but at least up to $k-1$ with $k\in \N^*$, the fixed regularity of the Sobolev space of which we take the trace on $F$:
\begin{theorem}~\label{ThJWF}
   Let $F$ be a closed $d$-set preserving Markov's
 local inequality at least up to $k-1$ for a fixed $k\in \N^*$.
 If $$0 < d < n, \quad 1 < p <\infty, \quad  \hbox{and} \quad \alpha = k - \frac{(n - d)}{p} > 0,$$ then the trace operator $\mathrm{Tr}: W^k_p(\R^n)\to B^{p,p}_\alpha(F)$ is
bounded linear surjection
with a bounded right inverse $E: B^{p,p}_\alpha(F)\to W^k_p(\R^n)$, $i.e.$ $\operatorname{Tr}\circ E=\operatorname{Id}$ on $B^{p,p}_\alpha(F)$.
 \end{theorem}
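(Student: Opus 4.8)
The plan is to build the trace theorem on a closed $d$-set $F$ from two classical ingredients: a Whitney-type decomposition of $\R^n\setminus F$ adapted to polynomial approximation, and the Markov inequality (up to degree $k-1$) that controls how a polynomial of bounded sup-norm on a small cube behaves under differentiation. I would first recall the definition of the Besov space $B^{p,p}_\alpha(F)$ on the $d$-set via normalized local best polynomial approximations of degree $k-1$: for $f$ defined $m_d$-a.e.\ on $F$, one measures the $L_p(F,m_d)$-size of $f$ plus a dyadic sum capturing how well $f$ is approximated, on balls of radius $2^{-\nu}$, by polynomials, with the $\nu$-th term weighted by $2^{\nu\alpha}$. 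The relation $\alpha=k-(n-d)/p$ is exactly the Sobolev scaling that makes the following two constructions match.

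For the \emph{boundedness of $\mathrm{Tr}$}: given $u\in W^k_p(\R^n)$, Definition~\ref{DefGTrace} produces the Lebesgue-point values $\mathrm{Tr}\,u(x)$; these exist $m_d$-a.e.\ on $F$ because $d>n-k p$ forces the relevant Bessel/Riesz capacity to dominate $m_d$, so the exceptional set is $m_d$-null. To estimate the Besov norm of $\mathrm{Tr}\,u$ one compares, on each ball $B_r(x)$ with $x\in F$ and $r=2^{-\nu}$, the trace to the Taylor-type polynomial $P=P_{x,r}$ realizing (up to a constant) the best $L_1(B_r(x))$-approximation of $u$ by elements of $\mathcal P^{k-1}$; the classical Calderón--Zygmund / Deny--Lions estimate bounds $r^{-k}\|u-P\|_{L_1(B_r(x))}$ by a localized $L_p$-norm of $\nabla^k u$. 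Summing the $p$-th powers of the normalized approximation errors over $x\in F$ against $m_d$ and over dyadic scales $\nu$, and using the $d$-regularity $m_d(F\cap B_r(x))\sim r^d$ to convert the $m_d$-integral into an overlapping sum of Euclidean integrals of $|\nabla^k u|^p$, yields $\|\mathrm{Tr}\,u\|_{B^{p,p}_\alpha(F)}\le C\|u\|_{W^k_p(\R^n)}$. Here $d>n-1$, or more precisely Markov's inequality up to degree $k-1$, is what guarantees the polynomials $P_{x,r}$ restricted to $F$ still carry enough information — i.e.\ that the restriction map $\mathcal P^{k-1}\to \mathcal P^{k-1}|_{F}$ does not degenerate on small balls — so the Besov seminorm on $F$ is comparable to the one built from the ambient best approximations.

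For the \emph{bounded right inverse $E$}: I would use the Whitney extension machinery of Jonsson--Wallin. Take a Whitney decomposition $\{Q_i\}$ of $\R^n\setminus F$ into dyadic cubes with $\operatorname{diam}Q_i\sim \operatorname{dist}(Q_i,F)$, a subordinate smooth partition of unity $\{\varphi_i\}$, and for each $i$ a point $p_i\in F$ near $Q_i$; given $f\in B^{p,p}_\alpha(F)$ pick, for the relevant scale $r_i\sim\operatorname{diam}Q_i$, a near-best approximating polynomial $P_i\in\mathcal P^{k-1}$ of $f$ on $F\cap B_{r_i}(p_i)$ (this is where Markov's inequality up to $k-1$ is used again, to choose $P_i$ canonically and to bound $\nabla^j P_i$) and set $Ef=\sum_i \varphi_i P_i$ on $\R^n\setminus F$ and $Ef=f$ on $F$. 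One then checks: (a) $Ef\in W^k_p(\R^n)$ with norm $\le C\|f\|_{B^{p,p}_\alpha(F)}$, by estimating $\nabla^k(Ef)$ cube by cube — the crucial cancellation is that $\sum_i\varphi_i\equiv 1$ lets one replace $P_i$ by $P_i-P_{i'}$ for a neighbour $i'$, and the difference of two near-best polynomials on adjacent Whitney balls is controlled by consecutive Besov approximation terms; summing with the weights $2^{\nu\alpha}$ and the $d$-regularity of $m_d$ closes the estimate with the exponent $\alpha=k-(n-d)/p$; and (b) $\mathrm{Tr}(Ef)=f$ $m_d$-a.e., because at an $m_d$-a.e.\ point $x\in F$ the Besov condition forces the local mean oscillation of $f$ (via $P_i\to f(x)$) to vanish, so the averages in Definition~\ref{DefGTrace} converge to $f(x)$. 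Combining (a), (b) and the boundedness of $\mathrm{Tr}$ gives $\mathrm{Tr}\circ E=\operatorname{Id}$ on $B^{p,p}_\alpha(F)$ and surjectivity of $\mathrm{Tr}$.

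The main obstacle I anticipate is bookkeeping the two-parameter sum (over Whitney cubes \emph{and} over dyadic scales) so that the $d$-regularity of $m_d$ converts $m_d$-integrals into finitely-overlapping Euclidean integrals with the correct power $r^{d}$ versus $r^{n}$, and simultaneously keeping the polynomial degrees and the Markov constants uniform across scales; this is precisely the place where the hypotheses $0<d<n$, $\alpha=k-(n-d)/p>0$, and ``Markov up to $k-1$'' are all consumed, and where the argument genuinely uses $d>n-1$ (or the Wingren/Jonsson--Wallin geometric characterization) rather than mere $d$-regularity. The rest — existence of Lebesgue points $m_d$-a.e., the Deny--Lions polynomial approximation lemma, and the Whitney partition of unity — is standard and I would cite \cite{JONSSON-1984,WALLIN-1991,JONSSON-1997} for the technical lemmas rather than redo them.
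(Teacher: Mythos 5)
The paper does not actually prove Theorem~\ref{ThJWF} — it quotes it from Jonsson--Wallin (Ch.~VII) and Wallin (Prop.~4) — and your sketch reproduces exactly the standard Jonsson--Wallin argument those references contain ($m_d$-a.e.\ existence of Lebesgue-point values via the capacity condition $d>n-kp$, local best polynomial approximation plus $d$-regularity for the Besov bound on $\mathrm{Tr}$, and the Whitney-cube extension with Markov's inequality controlling the near-best polynomials), so it is consistent with the paper's source. One small correction: the hypothesis the argument consumes is Markov's inequality up to degree $k-1$ (automatic when $d>n-1$, and vacuous when $k=1$), not the condition $d>n-1$ itself, which the theorem does not assume and which would wrongly exclude, e.g., self-similar $d$-sets with $d\le n-1$ that still preserve Markov's inequality.
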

 The definition of the Besov space  $B^{p,p}_{\alpha}(F)$ on a closed
$d$-set $F$ can be found, for instance, in Ref.~\cite{JONSSON-1984} p.~135 and
Ref.~\cite{WALLIN-1991}. See also Triebel for equivalent definitions~\cite{TRIEBEL-1997}.

Note that for $d=n-1$,  as it also mentioned in~\cite{BARDOS-2016}, one has $\alpha=\frac{1}{2}$ and $
B_\frac{1}{2}^{2,2}(F)=H^\frac{1}{2}(F)$ as usual in the case of the classical results~\cite{LIONS-1972,MARSCHALL-1987} for Lipschitz boundaries $\del \Omega=F$.
Since $\alpha=\frac{1}{2}<1$, as noticed previously the geometrical condition for the boundary to preserve Markov's inequality  does not occur.

Moreover, considering only $H^1(\R^n)=\{u\in L_2(\R^n)|\; \nabla u \in L_2(\R^n)\}$ we deduce from Theorem~\ref{ThJWF} that
\begin{theorem}~\label{ThH1}
   Let $F$ be a closed $d$-set,
  $$0\le n-2 < d < n, \quad \hbox{and} \quad \alpha = 1 - \frac{(n - d)}{2} > 0,$$
  then the trace operator $\mathrm{Tr}: H^1(\R^n)\to B^{2,2}_\alpha(F)$ is
bounded linear surjection
with a bounded right inverse $E: B^{2,2}_\alpha(F)\to H^1(\R^n)$, $i.e.$ $\operatorname{Tr}\circ E=\operatorname{Id}$ on $B^{2,2}_\alpha(F)$.
 \end{theorem}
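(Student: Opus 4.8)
\textbf{Proof proposal for Theorem~\ref{ThH1}.}
The plan is to deduce this statement as a direct specialization of Theorem~\ref{ThJWF} to the case $k=1$, $p=2$. First I would observe that for $k=1$ the hypothesis that $F$ preserve Markov's local inequality ``up to $k-1=0$'' is vacuous: it concerns only polynomials of degree $0$, i.e.\ constants, whose gradient vanishes identically, so the inequality holds trivially on every closed set of $\R^n$ (this is exactly the remark made after Theorem~\ref{ThJWF}, and in~\cite[p.~198]{JONSSON-1997}). Hence the only structural hypothesis we retain from Theorem~\ref{ThJWF} is that $F$ is a closed $d$-set. Next I would check the numerology: with $k=1$ and $p=2$ the quantity $\alpha=k-\frac{n-d}{p}$ becomes $\alpha=1-\frac{n-d}{2}$, and the condition $\alpha>0$ reads $n-d<2$, i.e.\ $d>n-2$; combined with $d\le n$ (which is part of being a $d$-set) and with the requirement $0<d$, this is precisely the stated range $0\le n-2<d<n$. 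The Besov space $B^{p,p}_\alpha(F)$ with these parameters is $B^{2,2}_\alpha(F)$, and $W^k_p(\R^n)=W^1_2(\R^n)=H^1(\R^n)$ with the norm equivalence noted in the excerpt, so the conclusion of Theorem~\ref{ThJWF} transcribes verbatim into the claimed boundedness, surjectivity and existence of a bounded right inverse $E$ with $\mathrm{Tr}\circ E=\mathrm{Id}$ on $B^{2,2}_\alpha(F)$.

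One small point needs care: Theorem~\ref{ThJWF} is stated with the strict inequality $0<d<n$, whereas Theorem~\ref{ThH1} writes $0\le n-2<d<n$. The left endpoint $n-2\ge 0$ is a constraint on the ambient dimension ($n\ge 2$) rather than on $d$, and it is automatically implied by $n-2<d$ together with $d>0$ only when $n\ge 2$; for $n=1$ the inequality $d>n-2=-1$ is vacuous but then $d\in(n-2,n)=(-1,1)$ forces $d\in(0,1)$, which is still consistent. So no genuine widening of the hypotheses occurs, and I would simply note that the ranges coincide.

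I do not expect a real obstacle here, since the statement is a corollary; the only thing worth spelling out is \emph{why} dropping the Markov hypothesis is legitimate, and that is the content of the first paragraph. If one wanted a self-contained argument rather than a citation, the substantive work would all be inside the proof of Theorem~\ref{ThJWF} itself (the Whitney-type extension of Besov functions from a $d$-set and the trace estimates via the measure-density/$d$-regularity bounds), but that is precisely what we are entitled to assume. Hence the proof reduces to: (i) specialize $k\mapsto 1$, $p\mapsto 2$ in Theorem~\ref{ThJWF}; (ii) note the Markov condition is empty for $k=1$; (iii) rewrite $\alpha=1-\frac{n-d}{2}$ and check $\alpha>0\iff d>n-2$; (iv) identify $W^1_2(\R^n)=H^1(\R^n)$ and read off the conclusion. $\square$
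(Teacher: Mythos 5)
Your proposal is correct and follows essentially the same route as the paper: Theorem~\ref{ThH1} is stated there precisely as the specialization of Theorem~\ref{ThJWF} to $k=1$, $p=2$, with the observation (made right before, citing~\cite[p.~198]{JONSSON-1997}) that for $k=1$, i.e.\ $\alpha<1$, Markov's local inequality holds trivially on every closed set and thus imposes no extra hypothesis. Your bookkeeping on the range $n-2<d<n$ and the identification $W^1_2(\R^n)=H^1(\R^n)$ matches what the paper intends, so nothing further is needed.
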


\subsection{General framework of closed subsets of $\R^n$}\label{ss-Gen}
In the same time it is possible to consider more general measures than $d$-dimensional measures which can describe by their supports a boundary of a domain~\cite{ARXIV-CREO-2018,JONSSON-1994,JONSSON-2009}.

We follow \cite[Section 1]{JONSSON-1994} and say that a Borel measure $\mu$ on $\mathbb{R}^n$ 
with support $\operatorname{supp} \mu=F$ \emph{satisfies the $D_s$-condition} for an exponent $0<s\le n$ if there is a constant $c_s>0$ such that 
  \begin{equation}\label{EqMuDs}
    \mu(B_{kr}(x))\le c_s k^s \mu(B_r(x)), \quad x\in F, \quad r>0,\quad k\ge 1, \quad 0<k r\le 1.
  \end{equation}
Here as previously $B_r(x)\subset \R^n$  denotes an open ball centered at $x$ and of radius~$r$.
We say that $\mu$ \emph{satisfies the $L_d$-condition} for an exponent $0\leq d\le n$ if for some constant $c>0$ we have 
\begin{equation}\label{EqMuLd}
    \mu(B_{kr}(x))\ge c_d k^d \mu(B_r(x)), \quad x\in F, \quad r>0,\quad k\ge 1, \quad 0<k r\le 1.
  \end{equation}
We also introduce so called  the normalization condition
\begin{equation}\label{Eqnormalized}
c_1\leq \mu(B_1(x))\leq c_2,\quad x\in F,
\end{equation}
where $c_1>0$ and $c_2>0$ are constants independent of $x$.

%\begin{remark}\label{R:compare}
Combining (\ref{EqMuDs}) and (\ref{Eqnormalized}) one can find a constant $c>0$ such that 
\begin{equation}\label{E:lowreg}
\mu(B_r(x))\geq c\:r^s, \quad x\in F,\quad  0<r\leq 1,
\end{equation}
what implies $\dim_H F \leq s$, where $\dim_H F$ denotes the Hausdorff dimension of $F$. Similarly  (\ref{EqMuLd}) and (\ref{Eqnormalized}) yield a constant $c'>0$ such that 
\begin{equation}\label{E:upreg}
\mu(B_r(x))\leq c'\:r^d, \quad x\in F, \quad 0<r\leq 1,
\end{equation}
hence $\dim_H F \geq d$. Moreover, (\ref{EqMuDs}) implies the doubling condition 
$$\mu(B_{2r}(x))\leq c\:\mu(B_r(x)), \quad x\in F, \quad 0<r\leq 1/2,$$ where $c>0$ is a situable constant, \cite[Section 1]{JONSSON-1994}. 

If a Borel measure $\mu$ with support $F$ satisfies (\ref{E:lowreg}) and (\ref{E:upreg}) with $s=d$ for some $0<d\leq n$, then, according to Definition~\ref{Defdset}, $\mu$ is called a \emph{$d$-measure} and $F$ is called a \emph{$d$-set}.
Obviously, if we have (\ref{EqMuDs}), (\ref{EqMuLd}) and (\ref{Eqnormalized}) and $d=s$ then $\mu$ is a $d$-measure and $F$ a $d$-set. Otherwise, we consider measures, which by (\ref{E:lowreg}) and (\ref{E:upreg}) satisfy for some constants $c>0$ and $c'>0$
\begin{equation}
	c\:r^s\leq \mu(B_r(x))\leq c'\:r^d, \quad x\in F, \quad 0<r\leq 1.
\end{equation}

For this general measure $\mu$ supported on a closed subset $F\subset \mathbb{R}^n$ it is possible thanks to~\cite{JONSSON-1994}  to define the corresponding Lebesgue spaces $L_p(F,\mu)$ and Besov spaces $B_\beta^{p,p}(F,\mu)$ on closed subsets $F\subset \mathbb{R}^n$ in a such way that we have the following theorem 
\begin{theorem}\label{ThGBesov}
Let $0\leq d\leq n$, $d\leq s\leq n$, $s>0$, $1\le p \le +\infty$, 
\begin{equation}\label{E:tracecond}
\frac{n-d}{p}<\beta <1+\frac{n-s}{p},
\end{equation}
and let $F\subset \R^n$ be a closed set which is the support of a Borel measure $\mu$ satisfying (\ref{EqMuDs}), (\ref{EqMuLd}) and (\ref{Eqnormalized}).

 Then, considering the Besov space $B_\beta^{p,p}(F,\mu)$ on $F$, defined as the space of $\mu$-classes of real-valued functions $f$ on $F$ such that the norm 
\begin{multline}
\left\|f\right\|_{B_\beta^{p,p}(F,\mu)}:=\notag\\
\left\|f\right\|_{L_p(F,\mu)}+\left(\sum_{\nu=0}^\infty 2^{\nu(\beta-\frac{n}{p})}\int\int_{|x-y|<2^{-\nu}}\frac{|f(x)-f(y)|^p}{\mu(B(x,2^{-\nu}))\mu(B(y,2^{-\nu}))}\mu(dy)\mu(dx)\right)^{1/p}
\end{multline}
is finite, the following statements hold:  
  \begin{enumerate}
   \item[(i)] $\operatorname{Tr}_F$ is a continuous linear operator from $W^\beta_p(\R^n)$ onto $B^{p,p}_\beta(F)$, and
\begin{equation}\label{E:traceopbound}
\left\|\operatorname{Tr}_F f\right\|_{B_\beta^{p,p}(F)}\leq c_\beta\left\|f\right\|_{W^\beta_p(\mathbb{R}^n)},\quad f\in W^\beta_p(\mathbb{R}^n),
\end{equation}
with a constant $c_\beta>0$ depending only on $\beta$, $s$, $d$, $n$, $c_s$, $c_d$ $c_1$ and $c_2$.
   \item[(ii)] There is a continuous linear extension operator $E_F:B^{p,p}_\beta(F)\to  W^\beta_p (\R^n)$ such that $\operatorname{Tr}_F(E_F f)=f$ for $f\in B^{p,p}_\beta(F)$.
  \end{enumerate}
\end{theorem}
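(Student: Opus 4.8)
The plan is to follow the atomic-decomposition technique of Jonsson and Wallin that underlies Theorem~\ref{ThJWF}, adapting every step so that Ahlfors regularity $\mu(B_r(x))\approx r^d$ is replaced by the weaker two-sided bound $c\,r^s\le \mu(B_r(x))\le c'\,r^d$ coming from (\ref{EqMuDs}), (\ref{EqMuLd}) and (\ref{Eqnormalized}); since $\mu$ need not be a $d$-measure, the conclusion cannot be quoted from Theorem~\ref{ThJWF} and must be re-proved directly. The geometric set-up is a dyadic resolution of $F$: for each $\nu\in\N$ one extracts, from a bounded-overlap cover of $F$ by balls of radius $2^{-\nu}$ centred on $F$, a disjoint Borel partition $\{F_{\nu,i}\}_i$ of $F$ into pieces of diameter $\lesssim 2^{-\nu}$; the hypotheses above give the uniform control $c\,2^{-\nu s}\le \mu(F_{\nu,i})\le c'\,2^{-\nu d}$ together with the doubling property $\mu(B_{2r})\lesssim \mu(B_r)$, which is precisely what the covering combinatorics needs. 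In parallel one fixes a Whitney decomposition $\{Q_j\}$ of $\R^n\setminus F$ with $\operatorname{diam} Q_j\approx \operatorname{dist}(Q_j,F)$ and a subordinate smooth partition of unity $\{\varphi_j\}$.

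For part~(i), I would first show that the limit in Definition~\ref{DefGTrace} exists for $\mu$-almost every $x\in F$ whenever $f\in W^\beta_p(\R^n)$ (for non-integer $\beta$ this coincides with $B^{p,p}_\beta(\R^n)$), by a standard maximal-function argument that uses the lower bound (\ref{E:lowreg}); the hypothesis $\beta>(n-d)/p$ is exactly the positivity-of-Besov-capacity condition guaranteeing that this $\mu$-trace is well defined and lies in $L_p(F,\mu)$. The Besov seminorm of $\operatorname{Tr}_F f$ is then estimated scale by scale: on each piece $F_{\nu,i}$ one compares $f$ with its mean over a Euclidean ball of radius $\approx 2^{-\nu}$ and bounds the discrepancy by the local best polynomial approximation of $f$, i.e.\ by the $\R^n$-Besov atoms of $f$; summing over $i$ and $\nu$, the denominators $\mu(B(x,2^{-\nu}))\mu(B(y,2^{-\nu}))$ are absorbed via (\ref{EqMuDs})--(\ref{EqMuLd}) and the two resulting geometric series in $\nu$ converge exactly because $(n-d)/p<\beta<1+(n-s)/p$. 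This yields (\ref{E:traceopbound}) with the asserted dependence of $c_\beta$.

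For part~(ii), I would set $E_F f=f$ on $F$ and, on $\R^n\setminus F$, $E_F f=\sum_j P_j\,\varphi_j$, where $P_j$ is the near-best polynomial approximation to $f$ (in the sense built into the Besov norm on $F$) over the piece $F_{\nu_j,i_j}$, $\nu_j\approx -\log_2\operatorname{dist}(Q_j,F)$, lying closest to $Q_j$; for $\beta\le 1$, which covers the applications below, one may simply take $P_j$ to be the $\mu$-mean of $f$ over that piece. Linearity is clear, and $\operatorname{Tr}_F\circ E_F=\operatorname{Id}$ follows since, on approaching a $\mu$-density point $x\in F$ through $\R^n\setminus F$, the averages defining $E_F f$ return to $f(x)$. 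The substantial work — and the step I expect to be the main obstacle — is the bound $\|E_F f\|_{W^\beta_p(\R^n)}\le c\,\|f\|_{B^{p,p}_\beta(F,\mu)}$: one writes the Sobolev (equivalently Besov) norm of $E_F f$ as a sum over Whitney cubes, groups the cubes by generation, uses the doubling consequence of (\ref{EqMuDs}) to count how many cubes of a given generation neighbour a fixed piece $F_{\nu,i}$, and re-sums the telescoping differences of consecutive-scale coefficients; the exponent $d$ governs the number of neighbouring pieces (forcing $\beta>(n-d)/p$) while the exponent $s$ governs the loss incurred by differentiating the $\varphi_j$ (forcing $\beta<1+(n-s)/p$). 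Combining the two estimates gives the theorem.
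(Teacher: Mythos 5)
You should first be aware that the paper does not prove Theorem~\ref{ThGBesov} at all: its ``proof'' is the single remark that the statement is a particular case of \cite[Theorem 1]{JONSSON-1994}. So your attempt is by construction a different route from the paper's, but it is essentially a reconstruction of the route taken by the cited source itself: a dyadic net on $F$ with measure bounds drawn from (\ref{EqMuDs})--(\ref{Eqnormalized}), a Whitney decomposition of $\R^n\setminus F$ with a subordinate partition of unity, extension by local $\mu$-averages, and the two endpoints of (\ref{E:tracecond}) entering through the convergence of two geometric series. As a strategy this is sound and faithful to the Jonsson--Wallin machinery; what the paper's citation buys instead is that none of this has to be redone.

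As a standalone proof, however, your proposal has concrete gaps. (1) The $\mu$-a.e.\ existence of the limit in Definition~\ref{DefGTrace} and the bound $\left\|\operatorname{Tr}_F f\right\|_{L_p(F,\mu)}\lesssim\left\|f\right\|_{W^\beta_p(\R^n)}$ rest on the \emph{upper} bound (\ref{E:upreg}), $\mu(B_r(x))\le c'r^d$, combined with $\beta p>n-d$ (such a measure vanishes on sets of zero $(\beta,p)$-capacity, whose dimension is at most $n-\beta p<d$); you attribute this step to the lower bound (\ref{E:lowreg}), which is not what is used there --- the exponent $s$ enters instead in the seminorm denominators and in the counting on the extension side. (2) The target norm $B^{p,p}_\beta(F,\mu)$ involves first differences only, so the local approximants $P_j$ should be $\mu$-means throughout the whole admissible range $\beta<1+\frac{n-s}{p}$ (which may exceed $1$); ``near-best polynomial approximation built into the Besov norm on $F$'' is not meaningful for this norm, so either run the estimate with means for all admissible $\beta$ or you have only proved a restricted statement. (3) The theorem is stated for $W^\beta_p(\R^n)$ with $\beta$ possibly an integer --- precisely the case used later, $\beta=1$ --- while your identification $W^\beta_p(\R^n)=B^{p,p}_\beta(\R^n)$ holds only for non-integer $\beta$ or for $p=2$ (where $H^\beta=B^{2,2}_\beta$); the integer case for $p\neq 2$ needs a separate argument or an explicit restriction. (4) Finally, the decisive estimate $\|E_F f\|_{W^\beta_p(\R^n)}\le c\,\|f\|_{B^{p,p}_\beta(F,\mu)}$, which you yourself identify as the main obstacle, is announced (grouping Whitney cubes by generation, counting neighbours via doubling, telescoping between scales) but not carried out; until it is, the proposal is a correct plan rather than a proof.
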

Theorem~\ref{ThGBesov} is a particular case of~\cite[Theorem 1]{JONSSON-1994}. 

The spaces $B_\beta^{p,p}(F,\mu)$ are Banach spaces, while $B_\beta^{2,2}(F,\mu)$ are Hilbert spaces, and their corresponding scalar product is denoted by $\left\langle \cdot,\cdot\right\rangle_{B_\beta^{2,2}(F,\mu)}$. 

A priori the definition of $B_\beta^{p,p}(F,\mu)$ depends on both $F$ and $\mu$. However,  it was shown in \cite[Section 3.5]{JONSSON-1994} that for two different measures $\mu_1$ and $\mu_2$ satisfying hypotheses of Theorem~\ref{ThGBesov} and with common support $F$, if $f\in B^{p,p}_\beta(F,\mu_2)$, then $f$ can be altered on a set with $\mu_2$-measure zero, in such a way that $f$ becomes a function in $B^{p,p}_\beta(F,\mu_1)$. In other
 words, also by Theorem~\ref{ThGBesov}, the spaces  $B_\beta^{2,2}(F,\mu_1)$ and $B_\beta^{p,p}(F,\mu_2)$ are equivalent. Thus, we simplify the notations and instead of $B^{p,p}_\beta(F,\mu)$ simply write $B^{p,p}_\beta(F)$.

Let us notice~\cite{JONSSON-1994} that this time if $F$ is a $d$-set with $0<d\le n$ as defined in Sub-section~\ref{ss-Dset}, then $\mu=m_d$ satisfies (\ref{EqMuDs}), (\ref{EqMuLd}) and (\ref{Eqnormalized}) and hence it is possible to apply Theorem~\ref{ThGBesov}. The restriction on $\beta$ in Theorem~\ref{ThGBesov} becomes $0<\alpha<1$ with $\alpha=\beta-\frac{n-d}{p}$.
Consequently, from one hand, the space $B^{p,p}_\beta(F)$ is equivalent to the Besov space $B^{p,p}_\alpha(F)$ with $0<\alpha<1$ from Sub-section~\ref{ss-Dset} (see~\cite{JONSSON-1984}), which, from the other hand, by our previous remark for $\alpha<1$, explains why we don't need to impose  that $F$ preserves the local Markov inequality.   
 Thus in the framework of $d$-sets this theorem coincides with Theorem~\ref{ThJWF} for $\alpha<1$. %To be able to consider  $\alpha\ge 1$ it is necessary to impose the additinal restriction on $F$ to preserve the local Markov inequality.
 
 \begin{remark}\label{remH1-B1}
 	If we apply Theorem~\ref{ThGBesov} for $H^1(\R^n)$ we obtain the image of the trace equal to the Hilbert space $B^{2,2}_1(F)$ with the restrictions $$ n\ge s\ge d>n-2\ge 0.$$
 \end{remark}

% The independence of the constant $c_\beta$ of all except the stated quantities follows from \cite[Lemma 3 and its proof]{JONSSON-1994}.

%%%%%%%%%%%%%%%%%%%%%%%%%%%%%%%
\subsection{Integration by parts and the Green formula}\label{ss-Green}
Let us generalize the Green formula formulated for $d$-sets in~\cite{ARFI-2017} (initially proposed by Lancia~\cite[Thm.~4.15]{LANCIA-2002} for a Von Koch curve) and the integration by parts from Appendix~A Theorem~A.3~\cite{MAGOULES-2017} (see also the proof of formula~(4.11) of Theorem~4.5 in~\cite{ARXIV-CREO-2018}).

%In addition, for $u,\;v\in H^1(\Omega)$ with $\Delta u\in L_2(\Omega)$, the Green formula still holds in the framework of dual Besov spaces on a closed $d$-set boundary of $\Omega$ (see~\cite{LANCIA-2002} Theorem 4.15 for the von Koch case in $\R^2$):
% %
\begin{proposition}\label{PropGreen}\textbf{(Green formula)}
 Let $\Omega$ be a  domain in $\R^n$ ($n\ge 2$) with a closed boundary $\del \Omega$ which is the support of a Borel measure $\mu$ satisfying the conditions of Theorem~\ref{ThGBesov}  with $ n\ge s\ge d>n-2\ge 0.$
 Then
 \begin{enumerate}
 	\item the Green formula holds for all $u$ and $v$ from $H^1(\Omega)$ with $\Delta u\in L_2(\Omega)$,
\begin{equation}\label{FracGreen}
 \int_\Omega v\Delta u\dx + \int_\Omega \nabla v\cdot \nabla u \dx=\langle \frac{\del u}{\del n}, 
\mathrm{Tr}v\rangle _{((B^{2,2}_{1}(\del \Omega))', B^{2,2}_{1}(\del \Omega))},
\end{equation}
where $(B^{2,2}_{1}(\del \Omega))'$  is the dual space of $B^{2,2}_{1}(\del \Omega)$.
\item In addition the usual integration by parts holds for all $u$ and $v$ from $H^1(\Omega)$ in the following weak sense
      \begin{equation}\label{IPP}
 \langle u \nu_i,v\rangle_{(B^{2,2}_{1}(\del \Omega))', B^{2,2}_{1}(\del \Omega))}:= \int_\Omega \frac{\del u}{\del x_i} v\dx+\int_{\Omega} u\frac{\del v}{\del x_i}\dx \quad i=1,\ldots,n,
\end{equation}
where by $u \nu_i$ is denoted the linear continuous functional on $B^{2,2}_{1}(\del \Omega)$.
 \end{enumerate}
\end{proposition}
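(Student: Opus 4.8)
The plan is to build the normal-derivative functional by duality against the trace and then read the two formulas off the construction, exactly as in the classical Lipschitz case, the only genuinely new ingredient being the description of the kernel of the trace. The functional-analytic inputs I would use are: by Remark~\ref{remH1-B1} (the case $p=2$, $\beta=1$ of Theorem~\ref{ThGBesov}) together with the extension property of $\Omega$, the trace operator $\mathrm{Tr}\colon H^1(\Omega)\to B^{2,2}_1(\del\Omega)$ — defined through the averages of Definition~\ref{DefGTrace} — is bounded, linear, onto, and possesses a bounded linear right inverse $E_{\del\Omega}\colon B^{2,2}_1(\del\Omega)\to H^1(\Omega)$ with $\mathrm{Tr}\circ E_{\del\Omega}=\mathrm{Id}$; and, crucially, $\ker\bigl(\mathrm{Tr}|_{H^1(\Omega)}\bigr)=H^1_0(\Omega)$, the closure of $C_c^\infty(\Omega)$ in $H^1(\Omega)$. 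This last identity is the analogue of the classical fact for Lipschitz domains and, in the present generality, holds because $\del\Omega$ carries positive $W^1_2$-capacity precisely when $d>n-2$; it is the point established in~\cite{ARFI-2017} for $d$-sets and argued in the same way here.

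Granting these, for $u\in H^1(\Omega)$ with $\Delta u\in L_2(\Omega)$ I would set, for $\phi\in B^{2,2}_1(\del\Omega)$,
\[
\Bigl\langle \frac{\del u}{\del n},\phi\Bigr\rangle := \int_\Omega v\,\Delta u\,\dx + \int_\Omega \nabla v\cdot\nabla u\,\dx ,
\]
where $v\in H^1(\Omega)$ is any function with $\mathrm{Tr}\,v=\phi$. The key check is independence of the choice of $v$: the difference $w$ of two admissible choices lies in $\ker\mathrm{Tr}=H^1_0(\Omega)$, hence $w=\lim_k w_k$ in $H^1(\Omega)$ with $w_k\in C_c^\infty(\Omega)$; since $\Delta u\in L_2(\Omega)$, the very definition of the distributional Laplacian gives $\int_\Omega w_k\Delta u\,\dx = -\int_\Omega \nabla w_k\cdot\nabla u\,\dx$, and letting $k\to\infty$ yields $\int_\Omega w\,\Delta u\,\dx + \int_\Omega \nabla w\cdot\nabla u\,\dx=0$. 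Continuity follows by choosing $v=E_{\del\Omega}\phi$ and using Cauchy--Schwarz, which gives $\bigl|\langle \frac{\del u}{\del n},\phi\rangle\bigr|\le \|E_{\del\Omega}\|\,\bigl(\|\Delta u\|_{L_2(\Omega)}+\|\nabla u\|_{L_2(\Omega)}\bigr)\,\|\phi\|_{B^{2,2}_1(\del\Omega)}$, so $\frac{\del u}{\del n}\in (B^{2,2}_1(\del\Omega))'$. Formula~(\ref{FracGreen}) is then immediate: for an arbitrary $v\in H^1(\Omega)$, take $\phi=\mathrm{Tr}\,v$ and recall that the representative used in the definition is irrelevant.

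The integration-by-parts identity~(\ref{IPP}) is obtained by running the same scheme on the pairing $\phi=\mathrm{Tr}\,v\mapsto \int_\Omega \frac{\del u}{\del x_i}v\,\dx+\int_\Omega u\frac{\del v}{\del x_i}\,\dx$ for $u,v\in H^1(\Omega)$. For $\varphi\in C_c^\infty(\Omega)$ one has $\int_\Omega \frac{\del u}{\del x_i}\varphi\,\dx=-\int_\Omega u\frac{\del \varphi}{\del x_i}\,\dx$ — this is the definition of the weak derivative of $u$, equivalently the divergence theorem applied to the compactly supported $W^{1,1}(\Omega)$ function $u\varphi$ — and density of $C_c^\infty(\Omega)$ in $H^1_0(\Omega)=\ker\mathrm{Tr}$ (together with convergence of the two $L_2$ pairings) shows the pairing depends on $v$ only through $\mathrm{Tr}\,v$. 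Boundedness by $\|u\|_{H^1(\Omega)}\,\|\phi\|_{B^{2,2}_1(\del\Omega)}$ again comes from Cauchy--Schwarz after replacing $v$ by $E_{\del\Omega}\phi$. Thus $u\nu_i$ is a well-defined element of $(B^{2,2}_1(\del\Omega))'$ and~(\ref{IPP}) holds by construction.

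I expect the main obstacle to be precisely the kernel identity $\ker\mathrm{Tr}=H^1_0(\Omega)$. The inclusion $H^1_0(\Omega)\subseteq\ker\mathrm{Tr}$ is trivial from continuity of the trace and the vanishing trace of test functions, but the reverse inclusion — that every $H^1(\Omega)$-function with vanishing trace on $\del\Omega$ is approximable in $H^1(\Omega)$ by functions compactly supported inside $\Omega$ — is where the irregular geometry of the boundary genuinely enters and where the hypothesis $d>n-2$ (non-polarity of $\del\Omega$ for the $W^1_2$-capacity, guaranteed by the lower-regularity bound $\mu(B_r(x))\ge c\,r^s$ with $s\le n$ from Theorem~\ref{ThGBesov}) is indispensable. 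Once that is in hand, the remainder is the routine bookkeeping of the Lipschitz case carried over verbatim.
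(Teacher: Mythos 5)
Your core construction coincides with the paper's: there, too, the normal derivative is produced as the element of $(B^{2,2}_{1}(\del \Omega))'$ given by $w\mapsto \int_\Omega E_{\del \Omega}w\,\Delta u\,\dx+\int_\Omega \nabla(E_{\del \Omega}w)\cdot\nabla u\,\dx$, with continuity obtained exactly as you do, by Cauchy--Schwarz and the boundedness of the right inverse $E_{\del \Omega}$ coming from Theorem~\ref{ThGBesov}, and the same scheme is invoked for~(\ref{IPP}). Where you genuinely go beyond the paper is the well-definedness step: you require that the left-hand side of~(\ref{FracGreen}) depend on $v$ only through $\mathrm{Tr}\,v$, and you derive this from $\ker\bigl(\mathrm{Tr}|_{H^1(\Omega)}\bigr)=H^1_0(\Omega)$ together with density of $C_c^\infty(\Omega)$. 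The paper's proof works only with the particular representative $v=E_{\del \Omega}w$ and never checks that replacing it by an arbitrary $v\in H^1(\Omega)$ with the same trace does not change the volume integrals; since the proposition is stated for \emph{all} $v\in H^1(\Omega)$, that check (equivalently, the vanishing of $\int_\Omega w\,\Delta u\,\dx+\int_\Omega\nabla w\cdot\nabla u\,\dx$ for every $w$ of zero trace) is part of the content, so on this point your argument is the more complete one. You also (correctly) note that continuity and surjectivity of $\mathrm{Tr}:H^1(\Omega)\to B^{2,2}_1(\del\Omega)$ need the extension property of $\Omega$ in addition to Remark~\ref{remH1-B1}, an assumption the proposition does not state explicitly but which the paper's proof uses tacitly as well.

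The weak link is precisely the ingredient you single out: the identity $\ker\mathrm{Tr}=H^1_0(\Omega)$ (only the inclusion $\ker\mathrm{Tr}\subseteq H^1_0(\Omega)$ is actually used) is nowhere established in the paper, and the justification you offer --- positivity of the $W^1_2$-capacity of $\del\Omega$ for $d>n-2$ --- does not yield it: positive capacity rules out $H^1_0(\Omega)=H^1(\Omega)$, but it does not give the approximation statement that every $H^1(\Omega)$-function whose strictly defined trace (Definition~\ref{DefGTrace}) vanishes $\mu$-a.e.\ on $\del\Omega$ can be approximated in $H^1(\Omega)$ by compactly supported functions; that is a spectral-synthesis-type theorem. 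Such a characterization is available in the Jonsson--Wallin framework of $(\eps,\delta)$-domains with $d$-set boundaries, which is the setting of~\cite{ARFI-2017}, but for the general measures of Theorem~\ref{ThGBesov}, satisfying only (\ref{EqMuDs})--(\ref{Eqnormalized}) with $s\neq d$ allowed, it is a nontrivial additional lemma that your proof needs to state and prove (or cite precisely) rather than assert. To be fair, the paper's own terse proof needs the same independence property to pass from $L(\mathrm{Tr}\,v)$ to the left-hand side of~(\ref{FracGreen}) for arbitrary $v$, so the gap is shared; but as a self-contained argument, your proposal rests on an unproved input at exactly this point, and the capacity heuristic should be replaced by an actual trace-zero approximation theorem.
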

\begin{proof}
The statement follows, thanks to Theorem~\ref{ThGBesov}, from the surjective property of the linear continuous trace operator $\mathrm{Tr}_{\del \Omega}:H^1(\Omega)\to B^{2,2}_{1}(\del \Omega)$.
%Indeed, let us take $v \in H^1(\Omega)$. By Theorem~\ref{ThGBesov}, there exists $\operatorname{Tr}_{\del \Omega} v \in B^{2,2}_{1}(\del \Omega)$ and it holds~(\ref{E:traceopbound}).
Let us prove~(\ref{FracGreen}). We define  for a fixed $u\in H^1(\Omega)$ with $\Delta u\in L_2(\Omega)$ the linear functional
$$L: \quad w \in B^{2,2}_{1}(\del \Omega) \mapsto L(w)= \int_\Omega E_{\del \Omega}w\Delta u\dx + \int_\Omega \nabla ( E_{\del \Omega}w)\cdot \nabla u \dx \in \C,$$
where by Theorem~\ref{ThGBesov} the operator $E_{\del \Omega}: B^{2,2}_{1}(\del \Omega) \to H^1(\Omega)$ is the linear bounded extension operator such that $\operatorname{Tr}_{\del \Omega}(E_{\del \Omega} w)=w$ $\mu$ a.e. Hence, we denote $E_{\del \Omega}w\in H^1(\Omega)$ by $v$.
Then, by the continuity of the extension operator, we state that $L$ is continuous:
\begin{multline*}
	|L(w)|\le \left|\int_\Omega v\Delta u\dx \right|+\left|\int_\Omega \nabla v\cdot \nabla u \dx\right|\\
	\le\left (\|\Delta u\|_{L_2(\Omega)}+\|\nabla u\|_{L^2(\Omega)}\right)\|v\|_{H^1(\Omega)}\le C\left(\|\Delta u\|_{L_2(\Omega)}+\|\nabla u\|_{L^2(\Omega)}\right)\|w\|_{B^{2,2}_{1}(\del \Omega)}.
\end{multline*}
Hence, as $\operatorname{Tr}_{\del \Omega}(v)=w$, with notations $L(w)=\langle \frac{\del u}{\del n}, 
\mathrm{Tr}v\rangle _{((B^{2,2}_{1}(\del \Omega))', B^{2,2}_{1}(\del \Omega))}$ we exactly obtain the generalized Green formula~(\ref{FracGreen}).
By the same argument we also have~(\ref{IPP}).
% 
% In addition, repeating the construction~\cite[Thm.~4.15]{LANCIA-2002} and the proof of formula~(4.11) of Theorem~4.5 in~\cite{ARXIV-CREO-2018} for an increasing sequence of domains with at least Lipschitz boundaries $(\Omega_m)_{m\in \N^*}$ converging in the sense of characteristic function in $L^1(\Omega)$ to $\Omega$, $i.e$ allowing to write
% $$\Omega_m\subset \Omega_{m+1} \quad \hbox{and} \quad \Omega=\cup_{m=1}^\infty \Omega_m$$
% and giving in the framework of the proof of the Green formula~(\ref{FracGreen}) for all $v\in H^1(\Omega)$
% \begin{multline}
% 	L(v|_{\del \Omega}):=\int_\Omega v\Delta u\dx + \int_\Omega \nabla v\cdot \nabla u \dx\\=\lim_{m\to +\infty}
% 	\int_\Omega\mathds{1}_{\Omega_m} v\Delta u \dx + \int_\Omega \mathds{1}_{\Omega_m} \nabla v\cdot \nabla u \dx\\
% 	=\lim_{m\to +\infty} \langle \frac{\del u}{\del \nu}, \mathrm{Tr}v\rangle _{(H^{-\frac{1}{2}}(\del \Omega_m), H^\frac{1}{2}(\del \Omega_m))}
% \end{multline}
% 
% 
% 
% To prove formula~(\ref{IPP})
% we follow the proof of formula~(4.11) of Theorem~4.5 in~\cite{ARXIV-CREO-2018} using the existence of a sequence of domains $(\Omega_m)_{m\in \N^*}$ with Lipschitz boundaries such that $\Omega_m\subset \Omega_{m+1}$ and $\Omega=\cup_{m=1}^\infty \Omega_m$.

\end{proof}

\section{Sobolev admissible domains and the generalization of the Rellich-Kondrachov theorem}\label{SecTh-RK}
Thanks to Theorems~\ref{ThHajlasz} and~\ref{ThGBesov} we can generalize now the notion of admissible domains introduced in~\cite{ARFI-2017} in the framework of $d$-sets:
%Hence, we introduce the notion of admissible domains:
\begin{definition}\textbf{(Sobolev admissible domain)}\label{DefAdmis} Let $1<p<\infty$ and $k\in \N^*$ be fixed.
 A domain $\Omega\subset \R^n$  is called a ($W_p^k$-) Sobolev  admissible domain if it is an $n$-set, such that  $W_p^k(\Omega)=C_p^k(\Omega)$  as sets with equivalent norms (hence, $\Omega$ is a $W_p^k$-extension domain), with a closed  boundary $\del \Omega$ which is the support of  a Borel measure $\mu$ satisfying the conditions of Theorem~\ref{ThGBesov}.
\end{definition}

 Therefore, we summarize useful in  what follows results (see~\cite{SHVARTSMAN-2010} for more general results for the case $p>n$) %(see~\cite{HAJLASZ-2008,JONSSON-1984,JONSSON-1995,WALLIN-1991}) 
 for the trace and the extension operators :
\begin{theorem}\label{ThContTrace}
 Let $1<p<\infty$, $k\in \N^*$ be fixed. Let $\Omega$ be a Sobolev admissible domain in $\R^n$.
Then the following trace operators (see Definition~\ref{DefGTrace})
\begin{enumerate}
 \item $\mathrm{Tr}: W_p^k(\R^n)\to B^{p,p}_{k}(\del \Omega)\subset L_p(\del \Omega)$,
 \item $\mathrm{Tr}_\Omega:W_p^k(\R^n)\to W_p^k(\Omega)$,
 \item $\mathrm{Tr}_{\del \Omega}:W_p^k(\Omega)\to B^{p,p}_{k}(\del \Omega)$
\end{enumerate}
are linear continuous and surjective with linear bounded  right inverse, $i.e.$ extension, operators $E: B^{p,p}_{k}(\del \Omega)\to W_p^k(\R^n)$, $E_\Omega: W_p^k(\Omega)\to W_p^k(\R^n)$ and $E_{\del \Omega}: B^{p,p}_{k}(\del \Omega)\to W_p^k(\Omega)$.
\end{theorem}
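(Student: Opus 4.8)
The plan is to deduce all three assertions by combining the two structural theorems already at our disposal: Theorem~\ref{ThHajlasz} of Haj\l{}asz--Koskela--Tuominen, which governs the extension operator $E_\Omega$ on the domain itself, and Theorem~\ref{ThGBesov} of Jonsson, which governs the trace on the boundary together with its extension $E$. The genuine content of the proof is the observation that the boundary trace of an element of $W^k_p(\R^n)$ depends only on its restriction to $\Omega$; this is what lets the two pictures be glued into statement~3.

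\textbf{Statements 2 and 1.} Since, by Definition~\ref{DefAdmis}, a Sobolev admissible domain $\Omega$ is an $n$-set with $W^k_p(\Omega)=C^k_p(\Omega)$ in the sense of equivalent norms, Theorem~\ref{ThHajlasz}(ii) provides a bounded linear extension operator $E_\Omega\colon W^k_p(\Omega)\to W^k_p(\R^n)$. The operator $\mathrm{Tr}_\Omega$ is the restriction $w\mapsto w|_\Omega$, which is linear and satisfies $\norm{w|_\Omega}_{W^k_p(\Omega)}\le\norm{w}_{W^k_p(\R^n)}$ and $\mathrm{Tr}_\Omega\circ E_\Omega=\mathrm{Id}$; this is statement~2. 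For statement~1 I apply Theorem~\ref{ThGBesov} with the given $p$ and $\beta=k$: by Definition~\ref{DefAdmis} the boundary $\del\Omega$ carries a Borel measure $\mu$ obeying \eqref{EqMuDs}, \eqref{EqMuLd} and \eqref{Eqnormalized} for exponents $d\le s$ with $(k,p)$ in the admissible range \eqref{E:tracecond}, so Theorem~\ref{ThGBesov}(i)--(ii) yields the bounded linear surjection $\mathrm{Tr}\colon W^k_p(\R^n)\to B^{p,p}_k(\del\Omega)$ together with the bounded right inverse $E\colon B^{p,p}_k(\del\Omega)\to W^k_p(\R^n)$. The continuous inclusion $B^{p,p}_k(\del\Omega)\hookrightarrow L_p(\del\Omega)$ (with norm at most $1$) is immediate from the Besov norm, whose leading term is $\norm{f}_{L_p(\del\Omega,\mu)}$.

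\textbf{Statement 3.} Here I define $\mathrm{Tr}_{\del\Omega}$ on $W^k_p(\Omega)$ directly through Definition~\ref{DefGTrace}, i.e.\ via the averages over the truncated balls $\Omega\cap B_r(x)$, $x\in\del\Omega$, and I set $E_{\del\Omega}:=(\,\cdot\,)|_\Omega\circ E$. The crux is the compatibility lemma: \emph{for every $w\in W^k_p(\R^n)$ one has $\mathrm{Tr}\,w(x)=\mathrm{Tr}_{\del\Omega}(w|_\Omega)(x)$ for $\mu$-a.e.\ $x\in\del\Omega$.} I would prove it as follows. By the fine-properties part of the Jonsson--Wallin trace theory underlying Theorem~\ref{ThGBesov} (the key input being \eqref{E:lowreg} with $s<n$, which forces the relevant Sobolev capacity to control $\mu$), $\mu$-almost every $x\in\del\Omega$ is a strong Lebesgue point of $w$ relative to Euclidean balls, meaning $\frac{1}{\lambda(B_r(x))}\int_{B_r(x)}|w-\mathrm{Tr}\,w(x)|\,\dy\to 0$ as $r\to 0$. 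Combining this with the measure density condition $\lambda(\Omega\cap B_r(x))\ge c\,\lambda(B_r(x))$ of the $n$-set $\Omega$ one gets
\[
\Bigl|\frac{1}{\lambda(\Omega\cap B_r(x))}\int_{\Omega\cap B_r(x)}w\,\dy-\mathrm{Tr}\,w(x)\Bigr|
\le\frac{1}{c\,\lambda(B_r(x))}\int_{B_r(x)}\bigl|w-\mathrm{Tr}\,w(x)\bigr|\,\dy
\;\longrightarrow\;0\quad(r\to 0),
\]
so the truncated averages converge to $\mathrm{Tr}\,w(x)$; since they depend only on $w|_\Omega$ this is exactly $\mathrm{Tr}_{\del\Omega}(w|_\Omega)(x)$, and in particular $\mathrm{Tr}_{\del\Omega}$ is well defined on $W^k_p(\Omega)$ and independent of the chosen extension. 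Granting the lemma, statement~3 is immediate: for $u\in W^k_p(\Omega)$ one has $\mathrm{Tr}_{\del\Omega}u=\mathrm{Tr}(E_\Omega u)$, whence $\norm{\mathrm{Tr}_{\del\Omega}u}_{B^{p,p}_k(\del\Omega)}\le c_k\norm{E_\Omega u}_{W^k_p(\R^n)}\le c_k\,C\,\norm{u}_{W^k_p(\Omega)}$; and for $g\in B^{p,p}_k(\del\Omega)$, $\mathrm{Tr}_{\del\Omega}(E_{\del\Omega}g)=\mathrm{Tr}_{\del\Omega}\bigl((Eg)|_\Omega\bigr)=\mathrm{Tr}(Eg)=g$, while $\norm{E_{\del\Omega}g}_{W^k_p(\Omega)}\le\norm{Eg}_{W^k_p(\R^n)}\le\norm{E}\,\norm{g}_{B^{p,p}_k(\del\Omega)}$.

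\textbf{Main obstacle.} Everything except the compatibility lemma is bookkeeping; the lemma is where care is needed, because $\del\Omega$ is Lebesgue-null (the range \eqref{E:tracecond} forces $d\le s<n$), so ordinary Lebesgue differentiation says nothing on $\del\Omega$ and one must know that $W^k_p$-functions have strong Lebesgue points $\mu$-almost everywhere on the boundary. I would either extract this from Hardy--Littlewood maximal estimates for fractional-order Sobolev functions together with the fact that sets of vanishing $(k,p)$-capacity are $\mu$-null under \eqref{E:lowreg}, or, more economically, quote the analogous statement proved in~\cite{ARFI-2017} for admissible domains with a $d$-set boundary and note that its proof uses only the $n$-set property of $\Omega$ and the $D_s$-condition on $\mu$, both in force here.
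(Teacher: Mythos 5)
Your proposal is correct and follows essentially the same route as the paper: statement~1 is read off from Theorem~\ref{ThGBesov}, statement~2 from Theorem~\ref{ThHajlasz}(ii), and statement~3 is obtained by composing the corresponding trace and extension operators ($\mathrm{Tr}_{\del\Omega}=\mathrm{Tr}\circ E_\Omega$, $E_{\del\Omega}=\mathrm{Tr}_\Omega\circ E$). The only difference is that you make explicit the compatibility lemma — that the boundary trace of a $W_p^k(\R^n)$ function depends only on its restriction to $\Omega$, via $\mu$-a.e.\ strong Lebesgue points and the measure-density property of the $n$-set $\Omega$ — which the paper's one-line composition argument leaves implicit, although it is precisely what guarantees that $\mathrm{Tr}_{\del\Omega}$ is well defined (independent of the chosen extension) and that $\mathrm{Tr}_{\del\Omega}\circ E_{\del\Omega}=\operatorname{Id}$.
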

\begin{proof}
 It is a corollary of results given in Sections~\ref{SecExtDom} and~\ref{secTraceGreen}. %Refs.~\cite{HAJLASZ-2008,JONSSON-1984,JONSSON-1995,WALLIN-1991}. 
 Indeed, if $\Omega$ is Sobolev admissible, then by Theorem~\ref{ThGBesov}, the trace operator $\mathrm{Tr}: W_p^k(\R^n)\to B^{p,p}_{k}(\del \Omega)\subset L_p(\del \Omega)$ is linear continuous and surjective with linear bounded  right inverse $E: B^{p,p}_{k}(\del \Omega)\to W_p^k(\R^n)$ (point 1). On the other hand, by~\cite{HAJLASZ-2008}, $\Omega$ is a $W_p^k$-extension domain and $\mathrm{Tr}_\Omega:W_p^k(\R^n)\to W_p^k(\Omega)$ and $E_\Omega: W_p^k(\Omega)\to W_p^k(\R^n)$ are linear continuous (point 2). Hence, the embeddings
 $$B^{p,p}_{k}(\del \Omega) \to W_p^k(\R^n)\to W_p^k(\Omega) \quad \hbox{and} \quad W_p^k(\Omega)\to W_p^k(\R^n)\to B^{p,p}_{k}(\del \Omega)$$
 are linear continuous (point 3).
\end{proof}
By updating the class of admissible domains, all results of~\cite{ARFI-2017} still hold in this new class. For instance it is also possible to consider Sobolev admissible truncated domains for which two disjoint boundaries satisfy~Theorem~\ref{ThGBesov}.
Without any particular motivation here for the truncated domain, let us just formulate  the compactness of the embedding $H^1(\Omega)$ to $L^2(\Omega)$ for the Sobolev admissible domains:

 \begin{proposition}\label{PropCompEmb}
  Let
  $\Omega$ be a bounded Sobolev admissible truncated domain for $p=2$ and $k=1$.
  Then  the Sobolev space $H^1(\Omega)$ is compactly embedded in $L_2(\Omega)$: $$H^1(\Omega) \subset \subset L_2(\Omega).$$
 \end{proposition}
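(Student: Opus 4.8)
The plan is to reduce the compactness of $H^1(\Omega)\hookrightarrow L_2(\Omega)$ to the classical Rellich--Kondrachov theorem on a nice bounded set (e.g. a large open ball) by exploiting that a Sobolev admissible domain is, by definition, an $H^1$-extension domain. First I would fix a bounded open ball $B\subset\R^n$ with $\overline{\Omega}\subset B$; this is possible since $\Omega$ is bounded. By Theorem~\ref{ThContTrace} (point 2), there is a bounded linear extension operator $E_\Omega:H^1(\Omega)\to H^1(\R^n)$, and by composing with the restriction to $B$ we obtain a bounded linear operator $E_\Omega:H^1(\Omega)\to H^1(B)$ with $\|E_\Omega u\|_{H^1(B)}\le C\|u\|_{H^1(\Omega)}$ and $(E_\Omega u)|_\Omega=u$. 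In the truncated case one applies this extension separately across each of the two disjoint boundary components, or simply observes that the hypotheses of Theorem~\ref{ThContTrace} hold for the domain as a whole, so the same single extension operator is available; I would state this at the outset and then proceed with one boundary without loss of generality.

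The key steps, in order, are: (1) take a bounded sequence $(u_m)$ in $H^1(\Omega)$, so $\|u_m\|_{H^1(\Omega)}\le M$; (2) set $w_m:=E_\Omega u_m\in H^1(B)$, which is then a bounded sequence in $H^1(B)$ with bound $CM$; (3) invoke the classical Rellich--Kondrachov theorem for the bounded smooth (Lipschitz) domain $B$: the embedding $H^1(B)\subset\subset L_2(B)$ is compact, so there is a subsequence $(w_{m_j})$ converging strongly in $L_2(B)$; (4) restrict back to $\Omega$: since $w_{m_j}|_\Omega=u_{m_j}$ and $\|v|_\Omega\|_{L_2(\Omega)}\le\|v\|_{L_2(B)}$ for every $v\in L_2(B)$, the sequence $(u_{m_j})$ is Cauchy, hence convergent, in $L_2(\Omega)$. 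This exhibits, for every bounded sequence in $H^1(\Omega)$, a subsequence converging in $L_2(\Omega)$, which is exactly compactness of the embedding.

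There is essentially no hard analytic obstacle here once the extension operator is in hand: the entire content has been front-loaded into the definition of Sobolev admissible domain (the $n$-set plus $W^k_p=C^k_p$ condition, which via Theorem~\ref{ThHajlasz} of Haj\l{}asz--Koskela--Tuominen guarantees the bounded extension operator). The only point requiring a small remark is the boundedness of $\Omega$, which is assumed in the statement and is what allows us to choose the enclosing ball $B$ and thus to use the \emph{bounded}-domain form of Rellich--Kondrachov; without boundedness the embedding fails even for smooth domains. The ``main obstacle,'' insofar as there is one, is purely bookkeeping in the truncated case: one must check that extending across two disjoint boundary pieces (each meeting the hypotheses of Theorem~\ref{ThGBesov}) still yields a global $H^1(\R^n)$-extension, which follows since the relevant geometric/measure conditions are local near each boundary component and the interior poses no difficulty. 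Everything else is the standard extend--compactify--restrict argument.
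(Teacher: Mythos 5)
Your argument is correct and is essentially the paper's own route: the paper omits the proof, referring to Proposition~2 of~\cite{ARFI-2017}, whose argument (also underlying Theorem~\ref{ThCSEnSET} here) is exactly your extend--apply classical Rellich--Kondrachov on a ball--restrict scheme, made possible because a Sobolev admissible (truncated) domain is by definition a $W^1_2$-extension domain. One small remark: the extension operator comes solely from the $n$-set and $W^1_2(\Omega)=C^1_2(\Omega)$ hypotheses (Theorem~\ref{ThHajlasz}), so no verification involving the boundary measure conditions of Theorem~\ref{ThGBesov} or the two boundary components of the truncated domain is actually needed for this embedding.
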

 
 The proof follows with small modifications the proof of Proposition~2 in~\cite{ARFI-2017} and thus is omitted.
 
 However, we would like to recall main compactness results of~\cite{ARFI-2017} putting them in the new framework of Sobolev admissible domains with not necessarily a $d$-set boundary. 
 
 \begin{remark}
To have a compact embedding  it is important  that
 the domain $\Omega$ be a $W^k_p$-extension domain.
The boundness or unboudness of $\Omega$ is not important to have $W_p^{k}(\Omega)\subset \subset W_p^{\ell}(\Omega)$ with $k>\ell\ge1$ ($1<p<\infty$).
But the boundness of $\Omega$ is important for the  compact embedding in $L_q(\Omega)$.
\end{remark}
As a direct corollary we have the following generalization of the classical Rellich-Kondrachov theorem (see for instance Adams~\cite{ADAMS-2003} p.144 Theorem 6.2):
\begin{theorem}\label{ThCSEnSET} \textbf{(Compact Sobolev embeddings for $n$-sets, \cite{ARFI-2017})}
 Let $\Omega\subset \R^n$ be
 an $n$-set with $W_p^k(\Omega)=C_p^k(\Omega)$, $1<p<\infty$, $k,\ell\in \N^*$. Then there hold  the following compact  embeddings:
    \begin{enumerate}
     \item $W_p^{k+\ell}(\Omega)\subset \subset W_q^\ell(\Omega)$,
     \item $W_p^{k}(\Omega)\subset \subset L_q^{loc}(\Omega)$,  or $W_p^{k}(\Omega)\subset \subset L_q(\Omega)$ if $\Omega$ is bounded,
    \end{enumerate}
 with $q\in[1,+\infty[$ if $kp=n$, $q\in [1,+\infty]$ if $kp>n$, and with $q\in[1, \frac{pn}{n-kp}[$ if $kp<n$. %;
\end{theorem}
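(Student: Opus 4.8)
The plan is to reduce everything to the classical Rellich--Kondrachov theorem on $\R^n$ (or on a large ball containing $\Omega$) by exploiting the extension operator. First I would observe that the hypothesis $W_p^k(\Omega)=C_p^k(\Omega)$ together with $\Omega$ being an $n$-set means, by Theorem~\ref{ThHajlasz}(ii), that $\Omega$ is a $W_p^k$-extension domain: there is a bounded linear operator $E_\Omega: W_p^k(\Omega)\to W_p^k(\R^n)$ with $(E_\Omega u)|_\Omega=u$ and $\|E_\Omega u\|_{W_p^k(\R^n)}\le C\|u\|_{W_p^k(\Omega)}$. For part~1, given a bounded sequence $(u_j)$ in $W_p^{k+\ell}(\Omega)$, I would extend to $(E_\Omega u_j)$ bounded in $W_p^{k+\ell}(\R^n)$, multiply by a fixed cutoff $\varphi\in C_c^\infty(\R^n)$ which is $\equiv 1$ on a neighborhood of $\overline\Omega$ when $\Omega$ is bounded (and localize otherwise), so that $(\varphi E_\Omega u_j)$ is bounded in $W_p^{k+\ell}$ of a fixed large ball $B$. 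The classical Rellich--Kondrachov theorem on the bounded smooth domain $B$ gives a subsequence converging in $W_q^\ell(B)$ for the stated ranges of $q$ (the exponent conditions $kp<n$, $kp=n$, $kp>n$ and the Sobolev exponent $\frac{pn}{n-kp}$ are exactly the classical ones, since $k$ derivatives are being traded for the gain in integrability). Restricting to $\Omega$ and using $(E_\Omega u_j)|_\Omega=u_j$ yields convergence of $(u_j)$ in $W_q^\ell(\Omega)$, proving the compact embedding.

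For part~2, the case $\ell=0$: the same argument applied with the extension to a bounded ball $B$ and the classical compactness $W_p^k(B)\subset\subset L_q(B)$ (valid for bounded $B$, with the stated $q$-ranges) gives $W_p^k(\Omega)\subset\subset L_q(\Omega)$ when $\Omega$ is bounded. When $\Omega$ is unbounded one cannot expect global compactness into $L_q(\Omega)$, but a diagonal/exhaustion argument over an increasing sequence of balls $B_m$ gives, for any fixed bounded open $\Omega'\Subset\Omega$, compactness $W_p^k(\Omega)\subset\subset L_q(\Omega')$, which is exactly the statement $W_p^k(\Omega)\subset\subset L_q^{loc}(\Omega)$.

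The only genuine subtlety — and the place one must be careful rather than the place that is hard — is the bookkeeping of the cutoff function: multiplying $E_\Omega u_j$ by $\varphi$ must not destroy the $W_p^{k+\ell}$ bound, which is immediate from the Leibniz rule since $\varphi$ and all its derivatives are bounded, and it must not change the restriction to $\Omega$, which holds because $\varphi\equiv 1$ near $\overline\Omega$ in the bounded case (in the unbounded/local case one simply fixes $\varphi\equiv 1$ on the relevant $\Omega'$). Everything else is a direct transfer of the classical Rellich--Kondrachov theorem through the extension operator, so no new estimate is needed; the main obstacle is purely the verification that the hypotheses of Theorem~\ref{ThHajlasz}(ii) are in force, which is built into the statement. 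I would then simply cite Adams~\cite{ADAMS-2003} for the classical embedding on $B$ and conclude.
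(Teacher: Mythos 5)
Your overall route is the same as the paper's (the proof is the one of \cite{ARFI-2017}, and it is sketched again here inside the proof of Theorem~\ref{ThAnnaEmbF}): transfer the classical Rellich--Kondrachov theorem to $\Omega$ by composing the extension operator coming from Theorem~\ref{ThHajlasz}(ii) with restriction to a fixed ball and the classical compact embedding on that ball. The cutoff function you introduce is harmless but unnecessary: the plain restriction $\mathrm{Tr}_{B_r}$ to a ball containing $\overline{\Omega}$ (which is bounded on Sobolev spaces) already does the job, and this is what the paper uses.

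There is, however, one genuine gap in part~1 as you wrote it. The hypotheses ($\Omega$ an $n$-set with $W_p^k(\Omega)=C_p^k(\Omega)$) give, via \cite{HAJLASZ-2008}, a bounded extension operator $E_\Omega: W_p^k(\Omega)\to W_p^k(\R^n)$ for that particular order $k$ only; being a $W_p^k$-extension domain for one order does not automatically make $\Omega$ a $W_p^{k+\ell}$-extension domain (by Theorem~\ref{ThHajlasz}(ii) that would require $W_p^{k+\ell}(\Omega)=C_p^{k+\ell}(\Omega)$, which is not assumed). So the step ``extend $(u_j)$, bounded in $W_p^{k+\ell}(\Omega)$, to a bounded sequence in $W_p^{k+\ell}(\R^n)$'' is not justified by the stated hypotheses. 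The repair is simple and also explains why the admissible range of $q$ is exactly the one corresponding to trading $k$ derivatives: for each multi-index $\alpha$ with $|\alpha|\le\ell$, the sequence $D^\alpha u_j$ is bounded in $W_p^k(\Omega)$; extend each by $E_\Omega$, restrict to a ball $B\supset\overline{\Omega}$, apply the classical compactness $W_p^k(B)\subset\subset L_q(B)$, extract one common subsequence over the finitely many $\alpha$, and restrict back to $\Omega$ to obtain convergence of $(u_j)$ in $W_q^\ell(\Omega)$. Note also that in part~1 your parenthetical ``localize otherwise'' for unbounded $\Omega$ only produces local convergence, not the global $W_q^\ell(\Omega)$ compactness asserted there, whereas for part~2 your exhaustion argument correctly delivers the $L_q^{loc}$ statement.
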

% \begin{proof}
% Let us denote by  $B_r(x)$ a non trivial ball for the Euclidean metric in $\R^n$ (its boundary is infinitely smooth, and thus, it is a $W^k_p$-extension domain for all  $1<p<\infty$ and  $k\in \N^*$).  By~\cite{HAJLASZ-2008} (see also Theorem~\ref{ThContTrace}),  the extension $E: W_p^{k+\ell}(\Omega)\to W_p^{k+\ell}(\R^n)$ and the trace $\mathrm{Tr}_{B_r}:W_p^{k+\ell}(\R^n)\to W_p^{k+\ell}(B_r(x))$ are continuous. In addition, by  the classical Rellich-Kondrachov theorem on the ball $B_r(x)$, the embedding $K: W_p^{k+\ell}(B_r(x))\to W_q^\ell(B_r(x))$, for the mentioned values of $k$, $p$, $n$ and $\ell$, is compact.
%  Hence, for $\ell\ge 1$,  thanks again to~\cite{HAJLASZ-2008}, $E_1: W_q^\ell(B_r(x)) \to W_q^\ell(\Omega)$ is continuous, as the composition of continuous operators  $E_2: W_q^\ell(B_r(x)) \to W_q^\ell(\R^n)$ and $\mathrm{Tr}_{\Omega}:W_q^\ell(\R^n) \to W_q^\ell(\Omega)$. Finally, the embedding $W_p^{k+\ell}(\Omega)\subset W_q^\ell(\Omega)$ for $\ell\ge 1$ is compact, by the composition of the continuous and  compact operators:
%  $$E_1\circ K \circ \mathrm{Tr}_{B_r} \circ E: W_p^{k+\ell}(\Omega)\to W_q^\ell(\Omega).$$
%  When $\ell=0$, instead of Sobolev embedding $E_1$, we need to have the continuous embedding of Lebesgue spaces
%  $L_q(B_r(x)) \to L_q(\Omega)$, which holds if and only if $\Omega$ is bounded. If $\Omega$ is not bounded, for all measurable compact sets $K\subset \Omega$, the embedding $L_q(B_r(x))\to L_q(K)$ is continuous. This finishes the proof.
%  \end{proof}
\section{Compactness of the trace}\label{secCompTr}

In the same way as in~\cite{ARFI-2017}, we generalize  the classical Rellich-Kondrachov theorem for fractals:
\begin{theorem}\label{ThAnnaEmbF} \textbf{(Compact Besov embeddings)}
 Let $F\subset \R^n$ be a closed set satisfying conditions of Theorem~\ref{ThGBesov}, %preserving Markov's local inequality, 
  %$0<d<n$, $1<p<\infty$ and
 $\beta=k+\ell>0$ for $k,\ell\in \N^*$.

 Then, for the same $q$ as in Theorem~\ref{ThCSEnSET},
 the following continuous embeddings  are compact
 \begin{enumerate}
  \item $B^{p,p}_{\beta}(F)\subset \subset B^{q,q}_\ell(F)$ for  $\ell\ge 1$; % and $\alpha=\ell-\frac{n-d}{q}>0$;
  \item if $F$ is bounded in $\R^n$, $B^{p,p}_{\beta}(F)\subset \subset L_q(F)$, otherwise $B^{p,p}_{\beta}(F)\subset \subset L_q^{loc}(F)$ for $\ell\ge 0$.
 \end{enumerate}
 \end{theorem}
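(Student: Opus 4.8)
The plan is to transfer the compact Sobolev embeddings on the ambient space $\R^n$ (Theorem~\ref{ThCSEnSET}, applied to $\Omega=\R^n$, which is trivially an $n$-set with $W_p^k=C_p^k$, or rather applied to a bounded extension domain containing a neighborhood of $F$) to the Besov spaces on $F$ by sandwiching $B^{p,p}_\beta(F)$ between extension and trace operators, in exactly the same spirit as the proof of the analogous result in~\cite{ARFI-2017}. Concretely, for assertion~1, I would factor the identity on $B^{p,p}_\beta(F)$ as
\begin{equation*}
B^{p,p}_\beta(F)\ \xrightarrow{\ E_F\ }\ W^\beta_p(\R^n)\ \xrightarrow{\ \iota\ }\ W^\ell_p(\R^n)\ \xrightarrow{\ \mathrm{Tr}_F\ }\ B^{q,q}_\ell(F),
\end{equation*}
where $E_F$ is the bounded linear extension from Theorem~\ref{ThGBesov} (with $\beta=k+\ell$, noting $\beta$ still satisfies the range condition~(\ref{E:tracecond}) since $\ell$ is large enough relative to the trace exponent — this needs a short check), $\iota$ is the inclusion, and $\mathrm{Tr}_F$ is the bounded linear trace of Theorem~\ref{ThGBesov} applied at regularity $\ell$ with target exponent $q$. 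Since $\mathrm{Tr}_F\circ E_F=\mathrm{Id}$ on $B^{p,p}_\beta(F)$, the composite is the canonical embedding $B^{p,p}_\beta(F)\hookrightarrow B^{q,q}_\ell(F)$. The two outer maps are bounded, so it suffices that the middle inclusion $\iota: W^\beta_p(\R^n)\to W^\ell_p(\R^n)$ — more precisely its restriction to functions supported in a fixed bounded neighborhood of the bounded set $F$ — is compact; but that is precisely the content of Rellich–Kondrachov / Theorem~\ref{ThCSEnSET} with $\Omega$ a large ball. For the $L_q$ target in assertion~2 one uses $\ell=0$, so that $B^{q,q}_0(F)=L_q(F)$, and factors through $W^k_p(\R^n)\subset\subset L_q(\mathrm{ball})$ followed by the bounded restriction/trace $\mathrm{Tr}_F: L_q\to L_q(F,\mu)$ (bounded because $\mu$ is finite on the bounded set $F$ by~(\ref{Eqnormalized}) and the doubling property). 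The unbounded case follows by the usual exhaustion-by-compacts argument to get the local statements $L_q^{loc}$.

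The main technical point — and the place where I expect to have to be careful rather than the place I expect real difficulty — is the bookkeeping of admissible parameters: one must verify that when $\beta=k+\ell$ with $k,\ell\in\N^\ast$, both the extension step at regularity $\beta$ and the trace step at the lower regularity $\ell$ (and at the new integrability exponent $q$) fall within the hypotheses~(\ref{E:tracecond}) of Theorem~\ref{ThGBesov}, i.e. that $\frac{n-d}{q}<\ell<1+\frac{n-s}{q}$ fails in general for $\ell\ge1$, so in fact for the target one should not re-apply the Besov trace theorem naively but rather use the trace on $F$ as a bounded map onto $B^{q,q}_\ell(F)$ in the sense already established; alternatively, and more robustly, I would keep the target fixed as a genuine Besov space only for $\ell$ in the allowed range and otherwise phrase assertion~1 through the chain $W^{k+\ell}_p(\R^n)\subset\subset W^\ell_p(\R^n)$ composed with a bounded trace, using that $B^{p,p}_{k+\ell}(F)$ is, by Theorem~\ref{ThGBesov}, a complemented-like retract of $W^{k+\ell}_p(\R^n)$. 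A composition of a bounded operator with a compact operator with a bounded operator is compact, and continuity of each individual embedding in the statement is already guaranteed by Theorem~\ref{ThGBesov}; so the entire argument reduces to (i) checking the parameter ranges, (ii) invoking Theorem~\ref{ThCSEnSET} for the compact middle link, and (iii) the standard localization for the unbounded case. I would present it as a short corollary-style proof, emphasizing the retract identity $\mathrm{Tr}_F\circ E_F=\mathrm{Id}$ as the one structural ingredient that makes the sandwich work.
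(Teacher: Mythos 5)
Your proposal is essentially the paper's own proof: both factor the canonical embedding as $\mathrm{Tr}_F\circ(\hbox{compact Rellich--Kondrachov link on a bounded region})\circ E_F$, relying on the retract identity $\operatorname{Tr}_F\circ E_F=\operatorname{Id}$ from Theorem~\ref{ThGBesov}, and both handle assertion~2 with $\ell=0$, $B^{q,q}_0(F)=L_q(F)$ and a measure-restriction/localization argument. The only real difference is cosmetic: the paper routes the compact middle link through a ball, $W_p^{\beta}(B_R)\subset\subset W_q^{\ell}(B_r)$, followed by extension to $W_q^{\ell}(\R^n)$ and the trace onto $B^{q,q}_{\ell}(F)$, so that the integrability exponent changes from $p$ to $q$ before the final trace --- precisely the adjustment your displayed chain (which keeps $W_p^{\ell}(\R^n)$ in the middle) needs and which you yourself flag in your parameter discussion.
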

\begin{proof}
Indeed, for $\beta=k+\ell$, thanks to Theorem~\ref{ThGBesov}, the extension
$E_F:B^{p,p}_{\beta}(F) \to W_p^{\beta}(\R^n)$ is continuous. Hence, by Calderon~\cite{CALDERON-1961}, a non trivial ball is $W_p^{\beta}$-extension domain:
$\mathrm{Tr}_{B_r}$ (see the proof of Theorem~\ref{ThCSEnSET} in~\cite{ARFI-2017}) is continuous. Thus, the classical Rellich-Kondrachov theorem on the ball $B_r(x)$ gives the compactness of $K: W_p^{\beta}(B_R)\to W_q^\ell(B_r)$. Since, for $\ell\ge 1$, $E_2:W_q^\ell(B_r)\to W_q^\ell(\R^n)$ is continuous and, by Theorem~\ref{ThJWF},
 $\mathrm{Tr}_F:W_q^\ell(\R^n)\to B^{q,q}_\ell(F)$ is continuous too, we conclude that the operator $$\mathrm{Tr}_F\circ K \circ \mathrm{Tr}_{B_r} \circ E_F:B^{p,p}_{\beta}(F) \to B^{q,q}_\ell(F)$$
 is compact. For $j=0$, we have $W_q^0=L_q$, and hence, if $F\subset B_r(x)$, the operator
 $ L_q(B_r(x)) \to L_q(F)$ is a linear continuous measure-restriction operator in the sense of measure $\mu$. % on a $d$-set (see~\cite{JONSSON-1984} for the $d$-measures). 
 If $F$ is not bounded in $\R^n$,  for all bounded $\mu$-measurable subsets $K$ of $F$, the embedding $L_q(B_r(x))\to L_q(K)$ is continuous.
 \end{proof}

In particular, the compactness of the trace operator implies the following equivalence of the norms on $W_p^k(\Omega)$:
 \begin{proposition}\label{PropCompET}
 Let $\Omega$ be a Sobolev admissible domain in $\R^n$ with a compact boundary $\del \Omega$ and $1<p<\infty$, $k\in \N^*$. Then
 \begin{enumerate}
  \item $W_p^k(\Omega)\subset \subset L_p^{loc}(\Omega)$;
  \item $\mathrm{Tr}: W_p^k(\Omega)\to L_p(\del \Omega)$ is compact;
  \item If in addition the mesure $\mu$ is Borel regular then the image
       $\operatorname{Im}(\mathrm{Tr})= B^{p,p}_{k}(\del \Omega)$ is dense in $L_p(\partial\Omega)$. 
  \item $\|u\|_{W_p^k(\Omega)}$ is equivalent to $\|u\|_{\mathrm{Tr}}=\left(\sum_{|l|=1}^k \int_\Omega |D^l u|^p\dx +\int_{\del \Omega} |\mathrm{Tr}u|^pd\mu \right)^\frac{1}{p}. $
 \end{enumerate}
\end{proposition}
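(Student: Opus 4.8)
The plan is to derive all four items from the two generalized Rellich--Kondrachov statements already established, namely Theorem~\ref{ThCSEnSET} (applied with $\ell$ in the degenerate role $k+\ell$ replaced by $k$, i.e. the case ``$k=k$, $\ell=0$'' of the Sobolev embedding) and Theorem~\ref{ThAnnaEmbF} with $\beta=k$, $\ell=0$, together with the continuity of the three trace/extension operators from Theorem~\ref{ThContTrace}. First I would prove item~(1): since $\Omega$ is Sobolev admissible it is a $W_p^k$-extension domain with $W_p^k(\Omega)=C_p^k(\Omega)$, so Theorem~\ref{ThCSEnSET}(2) gives directly $W_p^k(\Omega)\subset\subset L_q^{loc}(\Omega)$ for the admissible range of $q$, in particular for $q=p$; if moreover $\Omega$ is bounded one gets the global statement $W_p^k(\Omega)\subset\subset L_p(\Omega)$.

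For item~(2) I would factor the trace operator $\mathrm{Tr}\colon W_p^k(\Omega)\to L_p(\del\Omega)$ through the composition
$$
W_p^k(\Omega)\xrightarrow{\,E_\Omega\,}W_p^k(\R^n)\xrightarrow{\,\mathrm{Tr}_F\,}L_p(\del\Omega),
$$
but to get compactness one must insert a compact link. The clean way is to apply Theorem~\ref{ThAnnaEmbF} with $F=\del\Omega$ (which is compact by hypothesis and satisfies the conditions of Theorem~\ref{ThGBesov} since $\Omega$ is Sobolev admissible) in the case $\ell=0$: the embedding $B^{p,p}_{k}(\del\Omega)\subset\subset L_q(\del\Omega)$ is compact, in particular for $q=p$. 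Composing with the continuous surjection $\mathrm{Tr}_{\del\Omega}\colon W_p^k(\Omega)\to B^{p,p}_{k}(\del\Omega)$ from Theorem~\ref{ThContTrace}(3) yields that $\mathrm{Tr}=\iota\circ\mathrm{Tr}_{\del\Omega}$, where $\iota\colon B^{p,p}_{k}(\del\Omega)\hookrightarrow L_p(\del\Omega)$ is compact; hence $\mathrm{Tr}$ is compact as the composition of a bounded operator with a compact one. This is the heart of the proposition and the main obstacle is purely bookkeeping: checking that the trace defined via Definition~\ref{DefGTrace} coincides on $W_p^k(\Omega)$ with $\iota\circ\mathrm{Tr}_{\del\Omega}$, i.e. that the Besov-valued trace, once composed with the inclusion into $L_p(\del\Omega,\mu)$, is the $\mu$-a.e. pointwise Lebesgue-average trace; this is exactly the content of Theorem~\ref{ThGBesov}/Theorem~\ref{ThJWF} and requires no new argument.

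For item~(3), under the extra assumption that $\mu$ is Borel regular I would argue that $C(\del\Omega)\subset B^{p,p}_{k}(\del\Omega)$ (Lipschitz, hence bounded, functions on the compact set $\del\Omega$ clearly lie in the Besov space by the finiteness of the Besov seminorm for such functions under the $D_s$/normalization conditions), and that $C(\del\Omega)$ is dense in $L_p(\del\Omega,\mu)$ by the standard Lusin-type / Stone--Weierstrass density theorem valid for finite Borel regular measures on a compact metric space; density of the intermediate space $\mathrm{Im}(\mathrm{Tr})=B^{p,p}_{k}(\del\Omega)$ in $L_p(\del\Omega)$ then follows a fortiori. Finally item~(4): the inequality $\|u\|_{\mathrm{Tr}}\le C\|u\|_{W_p^k(\Omega)}$ is immediate from the boundedness of $\mathrm{Tr}$ (item~(2), or just continuity) and of the differentiation operators. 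For the reverse inequality I would invoke a standard Peetre--Tartar / compactness argument: suppose not; then there is a sequence $u_m\in W_p^k(\Omega)$ with $\|u_m\|_{W_p^k(\Omega)}=1$ and $\|u_m\|_{\mathrm{Tr}}\to0$; by items~(1)--(2), after passing to a subsequence $u_m\to u$ strongly in $L_p^{loc}(\Omega)$ and $\mathrm{Tr}\,u_m\to\mathrm{Tr}\,u$ in $L_p(\del\Omega)$, while the vanishing of the $W_p^k$-seminorm part forces all $D^l u_m$ with $|l|=1,\dots,k$ to go to $0$ in $L_p(\Omega)$, so $u$ is (locally, hence on each connected component, by connectedness of the domain) a polynomial of degree $<k$; but $\|u\|_{\mathrm{Tr}}=0$ kills its trace on $\del\Omega$, and since a nonzero polynomial cannot have zero trace on the boundary $d$-set (the trace operator $\mathrm{Tr}_{\del\Omega}$ is injective on polynomials because $\del\Omega$ preserves Markov's inequality / is not contained in the zero set of a nonzero polynomial), we get $u\equiv0$; then $\|u_m\|_{W_p^k(\Omega)}\to0$, contradicting $\|u_m\|_{W_p^k(\Omega)}=1$. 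The only delicate point here is ensuring the compactness needed to extract the convergent subsequence controls the full $W_p^k$-norm and not merely a local $L_p$-norm — this is handled by combining the local compact embedding of item~(1) with the vanishing of the top-order derivatives, exactly as in the proof of Proposition~2 of~\cite{ARFI-2017}, which we may quote.
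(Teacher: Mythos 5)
Your overall route coincides with the paper's: the paper proves only point~3 directly (density of $\{v|_{\partial\Omega}:v\in\mathcal{D}(\R^n)\}$ in $C(\partial\Omega)$ by Stone--Weierstrass, then in $L_p(\partial\Omega,\mu)$ by Borel regularity) and refers for points~1, 2 and~4 to the proof of Proposition~2 of~\cite{ARFI-2017}; your reconstruction of those deferred steps --- point~1 from Theorem~\ref{ThCSEnSET}, point~2 by factoring $\mathrm{Tr}=\iota\circ\mathrm{Tr}_{\partial\Omega}$ with $\iota:B^{p,p}_{k}(\partial\Omega)\hookrightarrow L_p(\partial\Omega)$ compact by Theorem~\ref{ThAnnaEmbF} (with $\beta=k$, $\ell=0$, $F=\partial\Omega$ bounded) --- is exactly the intended argument. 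One slip in point~3: the inclusion $C(\partial\Omega)\subset B^{p,p}_{k}(\partial\Omega)$ you assert is false (a merely continuous function has no Besov regularity in general); what your parenthetical actually establishes, and what suffices, is that Lipschitz functions (or restrictions of $\mathcal{D}(\R^n)$ functions, as in the paper) lie in $B^{p,p}_{k}(\partial\Omega)$ and are dense in $C(\partial\Omega)$, hence in $L_p(\partial\Omega,\mu)$.

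Point~4 is where there is a genuine gap. Your contradiction argument needs the full $L_p(\Omega)$-norm of the normalized sequence to converge along a subsequence, i.e.\ the global compactness $W^k_p(\Omega)\subset\subset L_p(\Omega)$, which is available only for bounded $\Omega$; compactness of $\partial\Omega$ alone does not give it, and the fix you propose (``local compact embedding plus vanishing of the derivatives'') cannot work, because $L_p$-mass may escape to infinity. Concretely, for the exterior domain $\Omega=\{|x|>1\}$, which is Sobolev admissible with compact boundary, the functions $u_R(x)=\phi(x/R)$ with $\phi\in\mathcal{D}(\R^n)$ supported in $\{1<|x|<2\}$ satisfy $\|u_R\|_{\mathrm{Tr}}/\|u_R\|_{W^k_p(\Omega)}\to 0$ as $R\to\infty$, so the equivalence of point~4 itself fails in that generality: your proof (like the paper's citation of~\cite{ARFI-2017}) really requires $\Omega$ bounded, as in Section~\ref{secApplic}, and you should say so rather than defer. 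Two smaller remarks on the same step: since all derivatives of order $1\le|l|\le k$ tend to zero, the limit is a constant on the connected domain $\Omega$, so the detour through polynomials of degree $<k$ and injectivity of the trace on polynomials is unnecessary --- fortunately, because the justification you offer (that $\partial\Omega$ preserves Markov's inequality) is not among the hypotheses and is false for admissible boundaries such as a sphere. The constant case is settled simply because the normalization condition~(\ref{Eqnormalized}) forces $\mu(\partial\Omega)>0$.
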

\begin{proof}
Let us prove point~3. To prove all other points it is sufficient to follow the proof of Proposition~2 in~\cite{ARFI-2017}.

If $\del \Omega$ is endowed with a Borel regular measure $\mu$, then
 the space $\{v|_{\del \Omega} : v \in \mathcal{D}(\R ^n)\}$, which is dense in $C(\del \Omega)$ by the Stone-Weierstrass theorem for the uniform norm,   is also dense in $L_p(\del \Omega)$ (see Theorem~2.11 in~\cite{EVANS-2010}). Hence, $B^{p,p}_k(\del \Omega)$ is dense in $L_p(\del \Omega)$. 
 \end{proof}

% Going back to the Laplace transport problem on  exterior and truncated domains, we especially need the following theorem, thus formulated for $H^1$:
% \begin{theorem}\label{ThCompTrExtdom}\textbf{(Compactness of the trace)}
%  Let $\Omega$ be an admissible domain of $\R^n$ with a compact $d$-set boundary $\del \Omega$, $n-2< d_\del \Omega<n$. If $\Omega$ is an exterior domain, let $\Omega_S$ be its admissible truncated domain with $n-2< d_S<n$.
%   Then for all these domains, $i.e.$ for $D=\Omega$ or $D=\Omega_S$,
%   there exist linear trace operators
%  $$\mathrm{Tr}_\del \Omega:  H^1(D) \rightarrow L_2(\del \Omega)\quad\hbox{ and, if } S\ne \varnothing, \quad \mathrm{Tr}_S:  H^1(\Omega_S) \rightarrow L_2(S),$$
%  which are compact. Moreover, $\mathrm{Im(Tr_\del \Omega}(H^1(D)))=B^{2,2}_{\beta_\del \Omega}(\del \Omega)$ for $\beta_\del \Omega=1-\frac{n-d_\del \Omega}{2}>0$ and
%  $\mathrm{Im(Tr_S}(H^1(\Omega_S)))=B^{2,2}_{\beta_S}(S)$ for $\beta_S=1-\frac{n-d_S}{2}>0$.
%   \end{theorem}
% \begin{proof}
% It is a direct corollary of Proposition~\ref{PropCompET}.%\vspace*{4pt}
% \end{proof}

The Poincar\'e's inequality stays also true on a bounded Sobolev admissible domain~\cite{DEKKERS-2019}:
\begin{theorem}\label{inegPoinc}(Poincar\'e's inequality)
Let $\Omega\subset\mathbb{R}^n$ with $n\geq 2$ be a bounded connected Sobolev admissible domain. For all $u\in W^{1,p}_0(\Omega)$ with $1\leq p<+\infty$, there exists $C>0$ depending only on $\Omega$, $p$ and $n$ such that
$$\Vert u\Vert_{L^p(\Omega)}\leq C \Vert \nabla u\Vert_{L^p(\Omega)} .$$
Therefore the semi-norm $\Vert .\Vert_{W^{1,p}_0(\Omega)}$, defined by $\Vert u\Vert_{W^{1,p}_0(\Omega)}:=\Vert \nabla u\Vert_{L^p(\Omega)}$, is a norm which is equivalent to $\Vert .\Vert_{W^{1,p}(\Omega)}$ on $W^{1,p}_0(\Omega)$.

Moreover for all $u\in W^{1,p}(\Omega)$ there exists $C>0$ depending only on $\Omega$, $p$ and $n$ such that
$$\left\Vert u-\frac{1}{\lambda(\Omega)}\int_{\Omega} u\;d\lambda\right\Vert_{L^p(\Omega)}\leq C \Vert \nabla u\Vert_{L^p(\Omega)} .$$
\end{theorem}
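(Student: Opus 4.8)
The plan is to treat the two inequalities separately, deriving the second, genuinely global one from the generalized Rellich--Kondrachov theorem.

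First I would establish the Friedrichs inequality on $W^{1,p}_0(\Omega)$ by a direct extension-by-zero argument that does not even use the admissible structure. Since $\Omega$ is bounded, it is contained in an open cube $Q$ of side length $L$; for $\varphi\in C_c^\infty(\Omega)$ the zero extension $\tilde\varphi$ lies in $C_c^\infty(Q)$, and integrating $|\tilde\varphi|$ along one coordinate direction together with H\"older's inequality gives the elementary bound $\|\tilde\varphi\|_{L^p(Q)}\le L\,\|\nabla\tilde\varphi\|_{L^p(Q)}$, valid for every $1\le p<+\infty$. Restricting back to $\Omega$ yields $\|\varphi\|_{L^p(\Omega)}\le L\,\|\nabla\varphi\|_{L^p(\Omega)}$, and passing to the limit along a sequence in $C_c^\infty(\Omega)$ converging to an arbitrary element of $W^{1,p}_0(\Omega)$ proves the first inequality; the equivalence of $\|\nabla\cdot\|_{L^p(\Omega)}$ with $\|\cdot\|_{W^{1,p}(\Omega)}$ on $W^{1,p}_0(\Omega)$ is then immediate.

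For the mean-value (Poincar\'e--Wirtinger) inequality I would argue by contradiction and compactness. If the inequality failed there would be a sequence $(u_j)\subset W^{1,p}(\Omega)$ for which, after subtracting the mean and normalising, $v_j:=u_j-\frac{1}{\lambda(\Omega)}\int_\Omega u_j\,d\lambda$ satisfies $\int_\Omega v_j\,d\lambda=0$, $\|v_j\|_{L^p(\Omega)}=1$ and $\|\nabla v_j\|_{L^p(\Omega)}\to 0$; in particular $(v_j)$ is bounded in $W^{1,p}(\Omega)$. Since $\Omega$ is a bounded Sobolev admissible domain, Theorem~\ref{ThCSEnSET} (applied with $k=1$ and $q=p$, which lies in the admissible range in each of the three cases $p<n$, $p=n$, $p>n$) gives the compact embedding $W^{1,p}(\Omega)\subset\subset L^p(\Omega)$, so along a subsequence $v_j\to v$ in $L^p(\Omega)$, whence $\|v\|_{L^p(\Omega)}=1$ and, by H\"older on the bounded set $\Omega$, $\int_\Omega v\,d\lambda=0$. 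On the other hand $\nabla v_j\to 0$ in $L^p(\Omega)$, hence in $\mathcal{D}'(\Omega)$, which forces $\nabla v=0$; connectedness of $\Omega$ then makes $v$ a.e.\ equal to a constant, and the vanishing mean gives $v=0$, contradicting $\|v\|_{L^p(\Omega)}=1$. The constant produced this way depends only on $\Omega$, $p$ and $n$.

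The main obstacle is exactly the compact embedding $W^{1,p}(\Omega)\subset\subset L^p(\Omega)$ invoked in the second part: for the irregular, possibly fractal, boundaries considered here this is not classical, and it is precisely what the generalization of the Rellich--Kondrachov theorem (Theorem~\ref{ThCSEnSET}), built on the $W^k_p$-extension property encoded in the definition of a Sobolev admissible domain, supplies. The only remaining delicate point is the endpoint $p=1$, which is outside the range $1<p<\infty$ of Theorem~\ref{ThCSEnSET}; there one either uses that Jones' extension operator is bounded for all $1\le p\le\infty$ and reruns the Rellich--Kondrachov compactness argument directly on a ball containing $\Omega$, or deduces the case $p=1$ from $p>1$ by a truncation/density argument.
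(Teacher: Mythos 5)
Your proof is correct and follows essentially the same route as the paper: the $W^{1,p}_0$ case from the boundedness of $\Omega$ via extension by zero, and the mean-value inequality via the compact embedding $W^{1,p}(\Omega)\subset\subset L^p(\Omega)$ supplied by Theorem~\ref{ThCSEnSET} combined with the standard normalization-and-contradiction argument, which is exactly the reference (Evans, Section~5.8.1, Theorem~1) that the paper's proof invokes. Your remark on the endpoint $p=1$, which lies outside the range $1<p<\infty$ of Theorem~\ref{ThCSEnSET}, flags a point the paper's proof passes over in silence, and the fixes you propose (running Rellich--Kondrachov through the extension to a ball, or deducing $p=1$ by truncation/density) are appropriate.
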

\begin{proof}
The result for $u\in W^{1,p}_0(\Omega)$ comes from the boundness of $\Omega$. The result for $u\in W^{1,p}(\Omega)$ comes from the compactness of the embedding $W^{1,p}(\Omega)\subset\subset L^p(\Omega)$ from Theorem \ref{ThCSEnSET} and following for instance the proof  from~\cite{EVANS-2010} (see section 5.8.1 Theorem 1).
\end{proof}
Thus the results of~\cite{ARFI-2017}  on the Dirichlet-to-Neumann operator can be also updated in the framework of the Sobolev admissible domains of Definition~\ref{DefAdmis}.
For instance we have
\begin{theorem}\label{PoincStek}
Let $\Omega$ be a bounded Sobolev admissible domain in $\mathbb{R}^n$ ($n\geq2$) for $p=2$ and $k=1$. Then  the Poincaré-Steklov operator
$$A:B^{2,2}_{1}(\partial\Omega) \rightarrow (B^{2,2}_{1}(\partial\Omega))'$$
mapping $u\vert_{\partial\Omega}$ to $\partial_{\nu}u\vert_{\partial\Omega}$ is a linear bounded self adjoint operator with $\ker A \neq 0$.
\end{theorem}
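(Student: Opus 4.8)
The plan is to construct the Poincar\'e-Steklov (Dirichlet-to-Neumann) operator $A$ variationally, exactly as in the classical Lipschitz case, and to check boundedness, self-adjointness and the non-triviality of the kernel using the Green formula of Proposition~\ref{PropGreen} together with the trace/extension machinery of Theorem~\ref{ThContTrace} and the Poincar\'e inequality of Theorem~\ref{inegPoinc}. First I would fix $\varphi\in B^{2,2}_{1}(\partial\Omega)$ and look at the (harmonic extension) problem: find $u_\varphi\in H^1(\Omega)$ with $\mathrm{Tr}_{\partial\Omega}u_\varphi=\varphi$ and $\int_\Omega\nabla u_\varphi\cdot\nabla v\,\dx=0$ for all $v\in H^1_0(\Omega)$. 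Existence and uniqueness of $u_\varphi$ follow from Lax-Milgram applied on the affine subspace $E_{\partial\Omega}\varphi+H^1_0(\Omega)$, coercivity being guaranteed by the Poincar\'e inequality (Theorem~\ref{inegPoinc}) on $H^1_0(\Omega)$; moreover $\|u_\varphi\|_{H^1(\Omega)}\le C\|\varphi\|_{B^{2,2}_{1}(\partial\Omega)}$ because $E_{\partial\Omega}$ is bounded by Theorem~\ref{ThContTrace}. Then $\Delta u_\varphi=0\in L_2(\Omega)$, so $\partial_\nu u_\varphi$ is well defined as an element of $(B^{2,2}_{1}(\partial\Omega))'$ via~(\ref{FracGreen}), and I would \emph{define} $A\varphi:=\partial_\nu u_\varphi$.

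Next I would verify the three asserted properties. Linearity is immediate from uniqueness of $u_\varphi$. For boundedness, for $\psi\in B^{2,2}_{1}(\partial\Omega)$ I use~(\ref{FracGreen}) with $u=u_\varphi$, $v=E_{\partial\Omega}\psi$ to get
\begin{equation*}
\langle A\varphi,\psi\rangle_{((B^{2,2}_{1})',B^{2,2}_{1})}=\int_\Omega\nabla u_\varphi\cdot\nabla(E_{\partial\Omega}\psi)\,\dx,
\end{equation*}
so $|\langle A\varphi,\psi\rangle|\le\|\nabla u_\varphi\|_{L_2}\|E_{\partial\Omega}\psi\|_{H^1}\le C\|\varphi\|_{B^{2,2}_{1}(\partial\Omega)}\|\psi\|_{B^{2,2}_{1}(\partial\Omega)}$, which gives $\|A\|\le C$. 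For self-adjointness I note first that the right-hand side above is unchanged if $E_{\partial\Omega}\psi$ is replaced by \emph{any} $H^1(\Omega)$-representative of $\psi$ (the difference lies in $H^1_0(\Omega)$ and $u_\varphi$ is harmonic); taking the harmonic representative $u_\psi$ I obtain the symmetric bilinear form
\begin{equation*}
\langle A\varphi,\psi\rangle=\int_\Omega\nabla u_\varphi\cdot\nabla u_\psi\,\dx=\langle A\psi,\varphi\rangle,
\end{equation*}
which is exactly self-adjointness of the bounded operator $A$ on the Hilbert space $B^{2,2}_{1}(\partial\Omega)$ (identifying it with its bidual). Finally, $\ker A\neq\{0\}$ because the constant function $\mathbf 1$ on $\partial\Omega$ lies in $B^{2,2}_{1}(\partial\Omega)$ (its harmonic extension is the constant $1\in H^1(\Omega)$ since $\Omega$ is bounded, hence $\lambda(\Omega)<\infty$), and then $\langle A\mathbf 1,\psi\rangle=\int_\Omega\nabla 1\cdot\nabla u_\psi\,\dx=0$ for all $\psi$, so $A\mathbf 1=0$.

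The main obstacle I anticipate is making the identification $A\varphi=\partial_\nu u_\varphi$ rigorous and coherent: the Green formula~(\ref{FracGreen}) defines $\partial_\nu u$ only as the linear functional $w\mapsto\int_\Omega E_{\partial\Omega}w\,\Delta u\,\dx+\int_\Omega\nabla(E_{\partial\Omega}w)\cdot\nabla u\,\dx$, and one must check this functional is independent of the choice of extension/representative of $w$ — which it is, precisely when $\Delta u\in L_2(\Omega)$, because two extensions of $w$ differ by an element of $H^1_0(\Omega)$ and integrating by parts the two $\Omega$-integrals cancel against each other (this is the content of the cancellation already used implicitly in the proof of Proposition~\ref{PropGreen}). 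Once that well-definedness is in place, everything else is a routine transcription of the Lipschitz argument, with Theorem~\ref{inegPoinc} supplying coercivity and Theorem~\ref{ThContTrace} supplying the bounded trace and extension operators; no compactness of the trace (Proposition~\ref{PropCompET}) is actually needed for this particular statement, only boundedness.
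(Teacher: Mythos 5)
Your construction is the natural one (harmonic extension plus the Green formula of Proposition~\ref{PropGreen}), and most of it transcribes correctly; note that the paper itself offers no proof of Theorem~\ref{PoincStek}, referring instead to the earlier admissible-domain results. However, there is a genuine gap: at several points you identify "same trace" with "difference in $H^1_0(\Omega)$", i.e.\ you use $\ker\mathrm{Tr}_{\partial\Omega}=H^1_0(\Omega)$. Nothing in this paper establishes that identity, and for irregular (non-Lipschitz, non-$d$-set) boundaries in the general measure framework of Theorem~\ref{ThGBesov} it is a delicate, unproved fact. Concretely: (a) your claim that the functional $w\mapsto\int_\Omega E_{\partial\Omega}w\,\Delta u\,\dx+\int_\Omega\nabla(E_{\partial\Omega}w)\cdot\nabla u\,\dx$ is independent of the chosen $H^1$-representative of $w$ only follows for representatives differing by elements of $H^1_0(\Omega)$, not of $\ker\mathrm{Tr}_{\partial\Omega}$; relatedly, your $u_\varphi$ (and hence $A$) may depend on the particular extension operator $E_{\partial\Omega}$, so it is not clear you have defined \emph{the} map $u|_{\partial\Omega}\mapsto\partial_\nu u|_{\partial\Omega}$. (b) More damagingly, the kernel argument needs $u_{\mathbf 1}$ to be the constant function $1$; in your setup $u_{\mathbf 1}$ is the minimizer over the affine set $E_{\partial\Omega}\mathbf 1+H^1_0(\Omega)$, and $1$ belongs to that set only if $1-E_{\partial\Omega}\mathbf 1\in H^1_0(\Omega)$ — exactly the unjustified identification. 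Without it, $\langle A\mathbf 1,\mathbf 1\rangle=\|\nabla u_{\mathbf 1}\|_{L_2(\Omega)}^2$ need not vanish, and $\ker A\neq\{0\}$ is not proved. (The linearity, boundedness and symmetry steps survive, since there you only ever replace $E_{\partial\Omega}\psi$ by $u_\psi$, which \emph{does} differ from it by an element of $H^1_0(\Omega)$ by construction.)

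The gap is repairable inside the paper's toolkit, but not via Theorem~\ref{inegPoinc}: define $u_\varphi$ as the minimizer of the Dirichlet energy over the full closed affine set $\{u\in H^1(\Omega):\mathrm{Tr}_{\partial\Omega}u=\varphi\}$, i.e.\ impose $\int_\Omega\nabla u_\varphi\cdot\nabla z\,\dx=0$ for all $z\in\ker\mathrm{Tr}_{\partial\Omega}$. Coercivity on $\ker\mathrm{Tr}_{\partial\Omega}$ comes from the equivalent norm of Proposition~\ref{PropCompET}, point~4 (available because $\Omega$ bounded implies $\partial\Omega$ compact), since the boundary term vanishes there. With this definition $u_\varphi$ is canonical (independent of $E_{\partial\Omega}$), the pairing $\langle A\varphi,\psi\rangle=\int_\Omega\nabla u_\varphi\cdot\nabla u_\psi\,\dx$ is automatically representative-independent and symmetric, boundedness follows as you argued from Theorem~\ref{ThContTrace}, and $u_{\mathbf 1}=1$ holds trivially (the constant has zero energy and lies in the admissible set), giving $A\mathbf 1=0$ and hence $\ker A\neq\{0\}$.
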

\section{Application to the Poisson boundary valued and spectral problems}\label{secApplic}
In this section we show the application of the theory of functional spaces developed in the previous sections on the example of the Poisson equation with Robin boundary conditions which we can weakly solve on the Sobolev admissible domains. 
Let $\Omega$ be a $H^1$-Sobolev admissible domain with a compact boundary $\del \Omega$ and  $f\in L^2(\Omega)$. 
For $a>0$ we define $H^1(\Omega)$ 
endowed with the equivalent by Proposition~\ref{PropCompET} norm
\begin{equation}\label{defH1til}
\Vert u\Vert_{\mathrm{Tr}}^2=\int_{\Omega} |\nabla u|^2 dx+a\int_{\partial\Omega} |Tr_{\partial\Omega}u|^2 d\mu.
\end{equation}
Then $u\in H^1(\Omega)$ 
is called a weak solution of the Poisson problem~(\ref{LaplaceeqRobin1}) if for for all $v\in H^1(\Omega)$
$$ (u,v)_{\mathrm{Tr}}=\int_{\Omega} \nabla u\nabla v \;dx+a\int_{\partial\Omega} Tr_{\partial\Omega}uTr_{\partial\Omega}v \;d\mu=\int_{\Omega} fv\;dx.$$
Thus, the Riesz representation theorem gives us the well-posedness result:
%Proposition 3 in \cite{ARFI-2017} tell us that $\Vert.\Vert_{\mathrm{Tr}}$ is equivalent to the usual norm on $H^1(\Omega)$ so the Lax Milgram Theorem gives us:
\begin{theorem}\label{thmwplaplrob}
Let $\Omega$ be a $H^1$-Sobolev admissible domain with a compact boundary $\del \Omega$. Then for all  $f\in L^2(\Omega)$ and $a>0$ there exists a unique weak solution $u\in H^1(\Omega)$ of the Poisson problem~(\ref{LaplaceeqRobin1}) and it holds the stability estimate
$$\Vert u\Vert_{\mathrm{Tr}} \leq C \Vert f\Vert_{L^2(\Omega)}.$$
\end{theorem}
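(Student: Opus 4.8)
The plan is to reduce Theorem~\ref{thmwplaplrob} to a direct application of the Riesz representation theorem on the Hilbert space $H^1(\Omega)$ equipped with the scalar product $(\cdot,\cdot)_{\mathrm{Tr}}$ associated to the norm $\|\cdot\|_{\mathrm{Tr}}$ in~(\ref{defH1til}). First I would check that $(\cdot,\cdot)_{\mathrm{Tr}}$ is indeed a genuine scalar product and that $\|\cdot\|_{\mathrm{Tr}}$ is complete: bilinearity and symmetry are immediate, positivity follows because $\|u\|_{\mathrm{Tr}}=0$ forces $\nabla u=0$ a.e.\ on $\Omega$ (so $u$ is constant on the connected components) and $\mathrm{Tr}_{\partial\Omega}u=0$ $\mu$-a.e., which together with $a>0$ and the surjectivity/continuity of the trace (Theorem~\ref{ThContTrace}, point~3) forces $u=0$; completeness is exactly the statement of Proposition~\ref{PropCompET}, point~4, that $\|\cdot\|_{\mathrm{Tr}}$ is equivalent to the standard norm $\|\cdot\|_{H^1(\Omega)}$, under which $H^1(\Omega)$ is complete. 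This is where the hypothesis that $\Omega$ is $H^1$-Sobolev admissible with compact boundary $\partial\Omega$ enters, since Proposition~\ref{PropCompET} requires precisely this.

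Next I would verify that the right-hand side defines a bounded linear functional on $(H^1(\Omega),\|\cdot\|_{\mathrm{Tr}})$. Define $\Phi(v)=\int_\Omega f v\,\dx$ for $v\in H^1(\Omega)$. Linearity is clear, and by the Cauchy--Schwarz inequality in $L^2(\Omega)$ one has $|\Phi(v)|\le \|f\|_{L^2(\Omega)}\|v\|_{L^2(\Omega)}\le \|f\|_{L^2(\Omega)}\|v\|_{H^1(\Omega)}\le C\|f\|_{L^2(\Omega)}\|v\|_{\mathrm{Tr}}$, using once more the norm equivalence from Proposition~\ref{PropCompET}. Hence $\Phi\in (H^1(\Omega))'$ with $\|\Phi\|\le C\|f\|_{L^2(\Omega)}$.

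By the Riesz representation theorem applied to the Hilbert space $\bigl(H^1(\Omega),(\cdot,\cdot)_{\mathrm{Tr}}\bigr)$, there exists a unique $u\in H^1(\Omega)$ such that $(u,v)_{\mathrm{Tr}}=\Phi(v)$ for all $v\in H^1(\Omega)$, which is precisely the weak formulation, and it satisfies $\|u\|_{\mathrm{Tr}}=\|\Phi\|\le C\|f\|_{L^2(\Omega)}$, giving both uniqueness and the stability estimate. I would close by briefly noting that this weak solution is the correct notion: taking $v\in\mathcal{D}(\Omega)$ shows $-\Delta u=f$ in the sense of distributions, hence $\Delta u\in L^2(\Omega)$, and then the Green formula~(\ref{FracGreen}) of Proposition~\ref{PropGreen} applied in the variational identity yields $\langle\frac{\partial u}{\partial n},\mathrm{Tr}_{\partial\Omega}v\rangle_{((B^{2,2}_1(\partial\Omega))',B^{2,2}_1(\partial\Omega))}+a\int_{\partial\Omega}\mathrm{Tr}_{\partial\Omega}u\,\mathrm{Tr}_{\partial\Omega}v\,\dm=0$ for all $v$, i.e.\ the Robin condition $\frac{\partial u}{\partial n}+a\,u=0$ holds weakly on $\partial\Omega$. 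The only subtle point — and the one I would treat most carefully — is the justification of the norm equivalence and hence the Hilbert structure, which is not elementary here because $\partial\Omega$ may be fractal; but this has already been established in Proposition~\ref{PropCompET}, so under the stated hypotheses the argument is genuinely just Riesz representation.
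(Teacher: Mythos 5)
Your proposal is correct and follows essentially the same route as the paper, which also defines the weak formulation via the scalar product $(\cdot,\cdot)_{\mathrm{Tr}}$ associated to the norm~(\ref{defH1til}), invokes the norm equivalence of Proposition~\ref{PropCompET} to get the Hilbert structure, and concludes existence, uniqueness and the stability estimate directly from the Riesz representation theorem. Your additional remarks (positivity of the form, recovery of the Robin condition via the Green formula of Proposition~\ref{PropGreen}) are consistent elaborations of what the paper leaves implicit.
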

In the same time with the additional assumption that $\Omega$ is bounded,  ensuring the compactness of the embedding $i_{L_2(\Omega)}:H^1(\Omega)\to L_2(\Omega)$ by Proposition~\ref{PropCompET}, we also have the compactness of the operator $B: f\in L_2(\Omega) \mapsto B(f)=u\in H^1(\Omega)$ mapping a source term $f$ to the weak solution of the Poisson problem~(\ref{LaplaceeqRobin1}) (see for instance Theorem~3.6~\cite{ARFI-2017}). The compactness of the embedding $i_{L_2(\Omega)}$ allows also to apply the spectral Hilbert-Schmidt theorem for a auto-adjoint compact operator on a Hilbert space to obtain the usual properties of the spectral problem for the $-\Delta$ on the Sobolev admissible domains:
%We will now give a result on eigenvalues that will be very usefull:
\begin{theorem}\label{thmeigenfuncLaprob}
Let $\Omega$ be a bounded $H^1$-Sobolev admissible domain. The weak eigenvalue problem
$$\forall v\in H^1(\Omega) \quad (u,v)_{\mathrm{Tr}}=\lambda (u,v)_{L_2(\Omega)}$$
has a countable number of strictly positive eigenvalues of finite multiplicity, which is possible to numerate in the non-decreasing way:
$$0<\lambda_1\leq\lambda_2\leq\lambda_3\leq\cdots, \quad \lambda_j\to +\infty \quad j\to +\infty.$$
In addition the corresponding eigenfunctions forms  an orthonormal basis of $L^2(\Omega)$ and an orthogonal basis of $H^1(\Omega)$.
\end{theorem}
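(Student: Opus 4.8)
The plan is to cast the eigenvalue problem as an eigenvalue problem for a compact, self-adjoint, positive operator on the Hilbert space $L^2(\Omega)$, and then apply the Hilbert--Schmidt spectral theorem. First I would observe that, by Theorem~\ref{thmwplaplrob}, for every $f\in L^2(\Omega)$ there is a unique $u=Bf\in H^1(\Omega)$ with $(u,v)_{\mathrm{Tr}}=(f,v)_{L^2(\Omega)}$ for all $v\in H^1(\Omega)$, where $H^1(\Omega)$ is equipped with the scalar product $(\cdot,\cdot)_{\mathrm{Tr}}$ associated with the norm~(\ref{defH1til}); this norm is equivalent to the usual $H^1$-norm by Proposition~\ref{PropCompET}, so $(H^1(\Omega),(\cdot,\cdot)_{\mathrm{Tr}})$ is indeed a Hilbert space and the stability estimate shows $B:L^2(\Omega)\to H^1(\Omega)$ is bounded. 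Composing with the canonical embedding $i_{L^2(\Omega)}:H^1(\Omega)\to L^2(\Omega)$, which is compact since $\Omega$ is bounded and Sobolev admissible (Proposition~\ref{PropCompET}, point~1, together with Theorem~\ref{ThCSEnSET}), we obtain a compact operator $T:=i_{L^2(\Omega)}\circ B:L^2(\Omega)\to L^2(\Omega)$.

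Next I would check that $T$ is self-adjoint and positive on $L^2(\Omega)$. For $f,g\in L^2(\Omega)$, using the defining identity twice,
\begin{equation*}
(Tf,g)_{L^2(\Omega)}=(Bg,Bf)_{\mathrm{Tr}}=(Bf,Bg)_{\mathrm{Tr}}=(f,Tg)_{L^2(\Omega)},
\end{equation*}
so $T$ is self-adjoint; taking $g=f$ gives $(Tf,f)_{L^2(\Omega)}=\|Bf\|_{\mathrm{Tr}}^2\ge 0$, and this vanishes only if $Bf=0$, i.e. $f=0$, so $T$ is injective with $(Tf,f)_{L^2(\Omega)}>0$ for $f\neq0$. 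The Hilbert--Schmidt theorem for compact self-adjoint operators then yields an orthonormal basis $(\varphi_j)_{j\ge1}$ of $L^2(\Omega)$ consisting of eigenfunctions of $T$, with eigenvalues $\mu_j>0$ (positivity by the previous line, nonzero because $T$ is injective and the $\varphi_j$ are nonzero) accumulating only at $0$; ordering them as $\mu_1\ge\mu_2\ge\cdots\to0$ and setting $\lambda_j:=1/\mu_j$ gives $0<\lambda_1\le\lambda_2\le\cdots$, $\lambda_j\to+\infty$, and each eigenvalue of finite multiplicity (since the $\mu_j$-eigenspaces are finite-dimensional, compactness).

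Finally I would translate the eigenrelation $T\varphi_j=\mu_j\varphi_j$ back into the weak formulation: $T\varphi_j=\mu_j\varphi_j$ means $B\varphi_j=\mu_j\varphi_j$ (as elements of $H^1(\Omega)$, noting $B\varphi_j\in H^1(\Omega)$), hence for all $v\in H^1(\Omega)$,
\begin{equation*}
(\varphi_j,v)_{\mathrm{Tr}}=\tfrac{1}{\mu_j}(B\varphi_j,v)_{\mathrm{Tr}}=\tfrac{1}{\mu_j}(\varphi_j,v)_{L^2(\Omega)}=\lambda_j(\varphi_j,v)_{L^2(\Omega)},
\end{equation*}
so each $\varphi_j\in H^1(\Omega)$ is a weak eigenfunction with eigenvalue $\lambda_j$, and conversely any weak eigenpair gives an eigenpair of $T$, so these are all of them. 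Orthonormality in $L^2(\Omega)$ is immediate; orthogonality in $(H^1(\Omega),(\cdot,\cdot)_{\mathrm{Tr}})$ follows from $(\varphi_i,\varphi_j)_{\mathrm{Tr}}=\lambda_j(\varphi_i,\varphi_j)_{L^2(\Omega)}=0$ for $i\neq j$. Completeness of $(\varphi_j)$ in $H^1(\Omega)$ follows from completeness in $L^2(\Omega)$ together with the fact that $B$ has dense range in $H^1(\Omega)$ (its range contains all $\varphi_j$, whose span is $L^2$-dense, and $B$ is bounded below on its range up to the equivalent norms), or more directly from the standard argument that the $\mathrm{Tr}$-orthogonal complement of $\mathrm{span}(\varphi_j)$ in $H^1(\Omega)$ would consist of elements $w$ with $(w,\varphi_j)_{L^2(\Omega)}=0$ for all $j$, hence $w=0$. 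The only genuinely substantive input is the compactness of $i_{L^2(\Omega)}$, which is exactly the generalized Rellich--Kondrachov result (Proposition~\ref{PropCompET}/Theorem~\ref{ThCSEnSET}) established earlier; everything else is the routine abstract spectral machinery, so I expect no real obstacle beyond bookkeeping the passage between the two equivalent norms on $H^1(\Omega)$.
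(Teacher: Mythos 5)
Your proof is correct and takes essentially the same route as the paper: both reduce the weak eigenvalue problem to the spectral problem of a compact self-adjoint operator built from the solution operator of Theorem~\ref{thmwplaplrob} composed with the compact embedding $i_{L^2(\Omega)}$, and then apply the Hilbert--Schmidt theorem. The only difference is cosmetic: you apply the spectral theorem on $L^2(\Omega)$ to $T=i_{L^2(\Omega)}\circ B$, while the paper works on $\bigl(H^1(\Omega),(\cdot,\cdot)_{\mathrm{Tr}}\bigr)$ with $T=A\circ i_{L^2(\Omega)}$ (the same composition in the other order, with the same nonzero spectrum), and your write-up merely spells out the transfer of the eigenbasis between the two spaces that the paper leaves implicit.
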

\begin{proof}
It is sufficient to notice that the eigenvalue problem is equivalent to the spectrum problem  $Tu=\frac{1}{\lambda} u$ for the operator
$T=A\circ i_{L_2(\Omega)}: H^1(\Omega)\to H^1(\Omega)$ which is linear compact and auto-adjoint on the Hilbert space $H^1(\Omega)$.
Here $A$ is the linear bounded operator (existing by the Riesz representation theorem) which maps $v\in L_2(\Omega)$ to $Av\in H^1(\Omega)$ such that
$$\forall \phi \in H^1(\Omega) \quad (v,\phi)_{L_2(\Omega)}=(Av,\phi)_{\mathrm{Tr}}.$$
% \cite{EVANS-2010} for example permits to obtain the Theorem as $Sw=\eta w$ if and only if $-\Delta w=\frac{1}{\eta}w$ and $w\in H^1(\Omega)$.
\end{proof}
 \vspace*{4pt}

 \def\refname{References}
\bibliographystyle{siam}
\label{bib:sec}
\bibliography{/home/anna/Documents/Statii/bibtex/biblio.bib}
\end{document}